\title[An inverse Signorini obstacle problem]{An inverse Signorini obstacle problem}
\author[M. V. de Hoop, M. Lassas, J. Lu, L. Oksanen, Z. Zhao]{Maarten V. de Hoop, Matti Lassas, Jinpeng Lu, Lauri Oksanen, Ziyao Zhao}
\address{Maarten V. de Hoop: Computational and Applied Mathematics and Earth Science, Rice University, Houston, TX 77005, USA}
\email{mdehoop@rice.edu}
\address{Matti Lassas: Department of Mathematics and Statistics, University of Helsinki, FI-00014 Helsinki, Finland}
\email{matti.lassas@helsinki.fi}
\address{Jinpeng Lu: Department of Mathematics and Statistics, University of Helsinki, FI-00014 Helsinki, Finland} 
\email{jinpeng.lu@helsinki.fi}
\address{Lauri Oksanen: Department of Mathematics and Statistics, University of Helsinki, FI-00014 Helsinki, Finland} 
\email{lauri.oksanen@helsinki.fi}
\address{Ziyao Zhao: Department of Mathematics and Statistics, University of Helsinki, FI-00014 Helsinki, Finland} 
\email{ziyao.zhao@helsinki.fi}
\declaretheorem{theorem,definition,lemma,proposition,corollary,remark}}
\def\p{\partial}
\def\o2{\overline{O_2}}
\def\b{\setminus}
\def\R{\mathbb R}
\def\V{\mathcal V}
\def\H{\mathcal H}
\def\tr{\text{tr}}
\def\div{\text{div }}
\def\bs{\boldsymbol{\sigma}}
\def\be{\boldsymbol{\varepsilon}}
\def\bn{\boldsymbol{\nu}}
\def\n{\boldsymbol{n}}
\def\bu{\boldsymbol{u}}
\def\bv{\boldsymbol{v}}
\def\x{\boldsymbol{x}}
\DeclarePairedDelimiter{\norm}{\lVert}{\rVert}
\DeclarePairedDelimiter{\abs}{\lvert}{\rvert}
\newcommand\subsetsim{\mathrel{%
  \ooalign{\raise0.2ex\hbox{$\subset$}\cr\hidewidth\raise-0.8ex\hbox{\scalebox{0.9}{$\sim$}}\hidewidth\cr}}}
\newtheorem{main1}{Theorem}
\newtheorem{main2}[main1]{Theorem}
\begin{document}

\begin{abstract}
    We study the inverse problem of determining a Signorini obstacle from boundary measurements for the isotropic elasticity system. 
    We prove that the obstacle can be uniquely determined by a single measurement of displacement and normal stress for the Signorini problem on an open subset of the boundary up to a natural obstruction.
    In addition to considering the Signorini problem, we develop techniques that can be used to study inverse problems for general differential inequalities.
\end{abstract}

\maketitle

\section{Introduction}

The Signorini problem is a classical free boundary problem originally introduced by A. Signorini in linear elasticity \cite{sig1}.
The problem is characterized by the boundary conditions that the solution of the problem at each boundary point must satisfy one of two possible boundary conditions, without a prior knowledge of which condition applies to each point.
Physically, the Signorini conditions model frictionless contact between an elastic body and a rigid support, and they are formulated in terms of inequalities and a nonlinear complementarity condition.
While the original Signorini problem was formulated for the elasticity system, its scalar version has been widely studied as a classical variational problem arising from applications \cite{DL,kikuchi1988contact}.
In this paper, we study the inverse problem of determining a Signorini obstacle from a single boundary measurement for the isotropic elasticity system.
We start by formulating the scalar version of our problem, and then turn to the problem in linear elasticity.

\subsection{Scalar version}
Let $\Omega \subset \R^n$ be a bounded connected open set with smooth boundary, and $O\subset\subset \Omega$ be a connected open subset with smooth boundary modelling an obstacle.
The scalar Signorini problem for the Laplace equation on $\Omega\setminus \overline{O}$ is formulated as follows, with smooth boundary data $f$ on the exterior boundary $\partial \Omega$:
\begin{equation}
\label{eq:direct_problem_lap}
\begin{cases}
\hfil \Delta u=0 \;\textrm{ in }  \Omega\setminus \overline{O}, \\
\hfil u|_{\partial \Omega}=f, \\
\hfil u\geq 0, \;\; \partial_{\nu}u\geq 0, \;\; u\partial_{\nu} u =0 \textrm{ on } \partial O,    
\end{cases}
\end{equation}
where the normal derivative $\partial_{\nu} u|_{\partial O}$ is taken with respect to the outward unit normal $\bn$ for $\Omega\setminus \overline{O}$ at $\partial O$.
This is also known as the thin obstacle problem that appears in studying classical obstacle problems \cite{figalli2019regularity,ros2018obstacle,survey-obstacle} when the constraint is only imposed for the boundary (a hypersurface).

In general the Signorini problem \eqref{eq:direct_problem_lap} does not have smooth solutions up to the boundary of the obstacle.
%even if all the problem data are smooth.
The weak formulation of the problem \eqref{eq:direct_problem_lap} is understood as a variational problem of finding $u\in K$ where
$$K:=\big\{v\in H^1(U): v|_{\partial O} \geq 0, \; v|_{\partial \Omega}=f \big\},\quad U:=\Omega\setminus \overline{O},$$
such that 
\begin{equation}
\int_{U} \nabla u \cdot \nabla (v-u) dx \geq 0, \; \, \textrm{ for all }v\in K.
\end{equation}
A general theory for the existence and uniqueness of solution to the variational problem was developed by Lions-Stampacchia in \cite{LS}.
%uniqueness is directly by variational problem. Add variational inequality for two solutions, prove that they differ by constant which has to be zero due to fixed boundary value.
It was known due to Frehse \cite{Frehse1977} and Caffarelli \cite{Caffarelli1979} that the solution to the scalar Signorini problem is in $H^2(U)\cap C^{1,\alpha}(\overline{U})$ for some $\alpha\leq 1/2$.
The optimal $C^{1,\frac12}$-regularity was proved years later by Athanasopoulos-Caﬀarelli \cite{athanasopoulos}.
%\cite[Lemma 2.2]{Frehse1977} for $H^2$ regularity in the scalar case.
%Theorem 2.2 in Kinderlehrer's paper explicitly states global $H^2$ regularity in more complicated elasticity case, except near switching of boundary conditions, which is not present in our setting.

We consider an inverse obstacle problem: given a boundary value $f$, does the normal derivative $\partial_{n} u|_{\Gamma}$ on an open subset $\Gamma\subset \partial \Omega$ of the exterior boundary $\partial \Omega$ uniquely determine the obstacle $O$?
Observe that the inverse problem cannot be uniquely solved when the given boundary value $f$ is a nonnegative constant function, in which case $u$ is the same (nonnegative) constant everywhere so the normal derivative is identically zero no matter what the obstacle is.
We prove that this is the only obstruction in solving the inverse obstacle problem.

\begin{main1}
\label{main-Laplace}
\begin{mdframed}[backgroundcolor=black!4, linewidth=0pt, innerleftmargin=0pt, innerrightmargin=0pt] 
{\it Let $\Omega\subset \mathbb{R}^n$ be a bounded connected open set with smooth boundary and $\Gamma\subset \partial \Omega$ be a nonempty open subset. Let $O_1,O_2\subset\subset \Omega$ be (possibly empty) open subsets with smooth boundary, representing the obstacles.
Assume that $\Omega\setminus \overline{O_1},\Omega\setminus \overline{O_2}$ are connected. 
Suppose $u_1,u_2$ solves the scalar Signorini problem \eqref{eq:direct_problem_lap} for a given non-constant function $f$ with obstacles $O_1,O_2$, respectively.
If $\partial_{n} u_1|_{\Gamma}=\partial_{n} u_2|_{\Gamma}$ with respect to the unit normal $\boldsymbol{n}$ to $\partial \Omega$, then $O_1=O_2$.}
\end{mdframed}
\end{main1}

\begin{remark}
As a special case of Theorem \ref{main-Laplace}, the boundary data determine if there is an (smooth) obstacle or not.
%Note that our method only applies to obstacles with smooth boundary.
%so we do not know if there is non-smooth obstacle inside, say less than $C^2$.
The conclusion of Theorem \ref{main-Laplace} is still valid if $f$ is a negative constant, which is stated as Proposition \ref{prop:nec_lap}.
\end{remark}

%The complement is connected condition is necessary, otherwise impossible to determine the inner shape.

%The necessarity for assumption that $f$ is non-constant is justified in Proposition \ref{prop:nec_lap}.

\subsection{Linear elasticity} 
\label{subsection_intro_elastic}

Consider that an elastic body occupying $\Omega\setminus \overline{O}$, in an equilibrium configuration, is constrained on $\p\Omega$ and rests on a frictionless rigid body $O$. Let us denote by $\bu:\Omega\setminus \overline{O}\to \R^n$ the displacement vector of the elastic body and by $\be(\bu)=\frac{1}{2}(\nabla \bu+(\nabla \bu)^T)$ the linearized strain tensor.
Then the stress tensor for the Lam\'e system is given by 
\begin{equation}
    \bs(\bu)=2\mu \be(\bu)+\lambda\, \tr(\be(\bu))I_n,
\end{equation}
where $\mu (x),\lambda(x)\in C^\infty(\overline{\Omega})$ are positive smooth Lam\'e coefficients.
Let us denote by $\bn$ and $\boldsymbol{n}$ the outward unit normal for $\Omega \setminus \overline{O}$ at $\partial O$ and $\partial \Omega$, respectively. 
At each point of $\p O$, one of the following two conditions holds (see e.g. \cite{DL,sofonea}):
\begin{equation}
    \label{eq:sig_bc}
    \begin{cases}
        \bs(\bu)_\tau=0,\ \bs(\bu)_\nu\leq 0,\ \bu_\nu = 0,\\
        \qquad\qquad\qquad \text{or}\\
        \bs(\bu)_\tau=0,\ \bs(\bu)_\nu=0,\ \bu_\nu < 0,
    \end{cases}
\end{equation}
where $\bu_\nu=\bu\cdot\bn$ is the normal displacement, $\bs(\bu)_\nu=\bs(\bu)\bn\cdot \bn$ is the normal stress, and $\bs(\bu)_\tau=\bs(\bu)\bn-\bs(\bu)_\nu \bn$ is the tangential stress (being zero due to no friction). 
The first set of conditions above is satisfied in the region where the elastic body is constrained by $\p O$, meaning that it is in the contact region and no displacement occurs in the normal direction. The second set of conditions holds in the region where normal displacement occurs, indicating the absence of an active obstacle and, consequently, no normal stress.
Note that in our setting we turn the typical formulation inside out: a deformable body $\Omega \setminus \overline{O}$ surrounding a rigid obstacle $O$.
Then the classical formulation of the model, with prescribed smooth displacement $\boldsymbol{f}$ on the exterior boundary $\partial \Omega$, is
\begin{equation}
    \label{eq:direct_problem_ela}
    \begin{cases}
    \hfil  \div \bs(\bu)=0, \text{ in }\Omega\setminus \overline{O},\\
    \hfil \bu=\boldsymbol{f}, \text{ on }\p\Omega,\\
    \hfil \bs(\bu)_\tau=0,\ \bu_\nu \leq 0,\ \bs(\bu)_\nu \leq 0,\ \bu_\nu\bs(\bu)_\nu=0,\ \text{ on }\p O.
    \end{cases}
  \end{equation}  
The Signorini problem in linear elasticity is to find a solution to the system \eqref{eq:direct_problem_ela}.

Similar to the scalar case, 
the system \eqref{eq:direct_problem_ela} does not have smooth solutions in general, and the problem
is understood as a variational problem of finding $\bu\in \boldsymbol{K}$ where
%the exact weak formulation with prescribed displacement is in Kinderlehrer's paper.
\begin{equation}
    \boldsymbol{K}:=\big\{\bv\in (H^1(U))^n\mid \bv\cdot \nu\leq 0 \text{ on }\p O,\ \bv=\boldsymbol{f}\text{ on }\p\Omega \big\},\quad U:=\Omega\setminus \overline{O},
\end{equation}
such that
\begin{equation}
    \int_{U} \bs(\bu) : (\be(\bv)-\be(\bu)) \,dx\geq 0,\ \text{for all }\bv\in \boldsymbol{K},
\end{equation}
where the operation $:$ is defined as $\bs:\be=\sum_{i,j=1}^n \sigma_{ij}\varepsilon_{ij}$, where $\sigma_{ij}$, $\varepsilon_{ij}$ are the components of the tensors $\bs,\,\be$, respectively. 
The existence and uniqueness of the weak solution to the variational problem above were studied by Fichera \cite{fichera2}, see also \cite{LS,sofonea}, and the solution is in $H^2(U)$ due to Kinderlehrer \cite{kinderlehrerF}.
In \cite{schumann1989} Schumann proved the $C^{1,\alpha}$-regularity of the solution in general dimensions $n\geq 2$ for some $\alpha>0$.
In dimension 3, the optimal $C^{1,\frac12}$-regularity was proved by Andersson \cite{john2016}.

\begin{figure}[h]
  \begin{center}
    \includegraphics[width=0.6\linewidth]{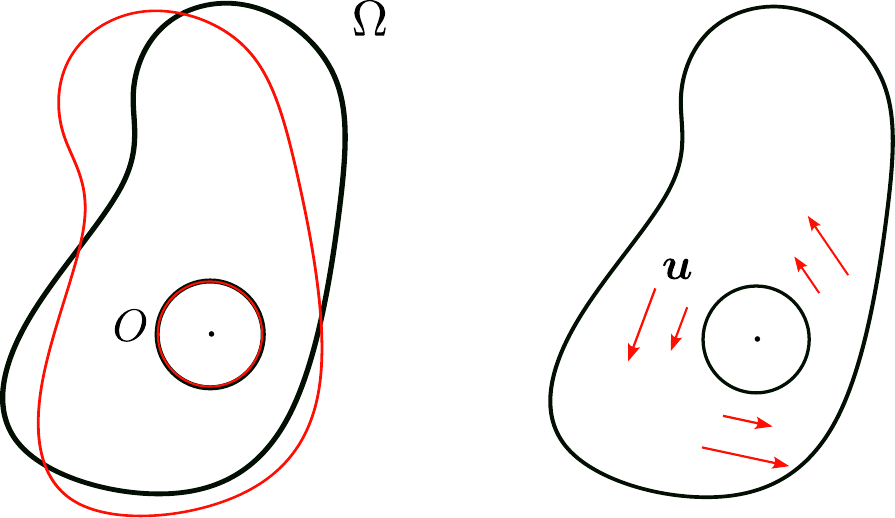}
    \caption{Boundary measurements of the displacement $\boldsymbol{u}$ on $\partial \Omega$ resulting from rotations cannot distinguish the size of a round Signorini obstacle $O$, in which case the stress tensor is identically zero. The right figure illustrates the displacement vector $\boldsymbol{u}$ resulting from a (infinitesimal) rotation with respect to the center of the obstacle. The displacement on $\partial O$ is perpendicular to the normal direction so the Signorini contact conditions on $\partial O$ are valid.}
    \label{fig_rotation}
  \end{center}
\end{figure}

We study the inverse obstacle problem: given boundary data $\boldsymbol{f}$, does the normal stress $\bs(\bu)\n|_{\Gamma}$ on a subset $\Gamma\subset \partial \Omega$ of the exterior boundary $\p \Omega$ uniquely determine the obstacle $O$? 
%The answer to this problem is given by Theorem \ref{main:elasticity}.
In general this is not possible if the given boundary data $\boldsymbol{f}$ is allowed to take the form of a rigid motion defined by
%and $\boldsymbol{f}\not\in \mathcal{R}$, where $\mathcal{R}$ is the rigid motion defined as
\begin{equation}
\label{rigid_motion}
    \mathcal{R}:=\big\{\boldsymbol{c}+A \x\mid \boldsymbol{c}\in \R^n,\ A\in \R^{n\times n},\ A^T=-A,\ \x\in\p\Omega \big\}.
\end{equation}
A counterexample to the unique determination is explained in Figure \ref{fig_rotation}, where the deformation of an elastic body corresponds to a rotation with respect to the center of a round obstacle $O$. The displacement vector is given by $\bu=A\x$, where $A$ is a skew-symmetric matrix and $\x$ is the position vector. In this case, the Signorini boundary conditions are satisfied on any sphere (with the same center) regardless of its radius, so the radius of the obstacle cannot be detected from measurements on the exterior boundary.
%Details of this example are given in Section \ref{sec-counterexamples}.

We are able to fully solve the inverse obstacle problem up to this natural obstruction.

\begin{main2}
\label{main:elasticity}
\begin{mdframed}[backgroundcolor=black!4, linewidth=0pt, innerleftmargin=0pt, innerrightmargin=0pt] 
{\it Let $\Omega\subset \mathbb{R}^n$ be a bounded connected open set with smooth boundary and $\Gamma\subset \partial \Omega$ be a nonempty open subset. Let $O_1,O_2\subset\subset \Omega$ be (possibly empty) open subsets with smooth boundary, representing the obstacles.
Assume that $\Omega\setminus \overline{O_1},\Omega\setminus \overline{O_2}$ are connected.
    Suppose $\bu_1, \bu_2$ solves the Signorini problem \eqref{eq:direct_problem_ela} for $\boldsymbol{f}\not\in \mathcal{R}$ with obstacles $O_1, O_2$, respectively. If $\bs(\bu_1)\n|_{\Gamma}=\bs(\bu_2)\n |_{\Gamma}$, then $O_1=O_2$.}
    \end{mdframed}
\end{main2}

\begin{remark}
%Theorem \ref{main:elasticity} is valid if one of the obstacles is empty.
In the case of $\boldsymbol{f}=\boldsymbol{c}+A\x \in\mathcal{R}$ for some constant vector $\boldsymbol{c}\in \mathbb{R}^n$ and skew-symmetric matrix $A$, the solvability of the inverse problem depends on $\boldsymbol{c}$ and $A$.
For example, when $|\boldsymbol{c}|$ is larger than the range of $A$, the inverse obstacle problem is uniquely solvable. A characterization of unique solvability in this case is stated in Proposition \ref{prop:nec_ela}.
%Lemma \ref{prop:nec_ela}
\end{remark}

%We note that $\boldsymbol{f}\not\in \mathcal{R}$ is necessary by Proposition \ref{prop:nec_ela}.

\smallskip

We prove Theorem \ref{main-Laplace} and \ref{main:elasticity} by arguing that both $O_1\setminus O_2$ and $O_2\setminus O_1$ are empty. We prove this by contradiction, with a unique continuation approach that dates back to Schiffer \cite{lax}.
The main difficulty lies in the fact that the intersection between two open sets may have rough boundary, even if the two open sets both have smooth boundary. First, the boundary of the intersection is not necessarily piecewise smooth and could have infinitely many connected components. 
Indeed, this can happen when one of the open set has wildly oscillating boundary of the form $e^{-1/x^2}\sin(1/x)$.
Moreover, the set of boundary points at which the unit normal is well-defined can be significantly smaller than the whole boundary, for example considering the disk with a slit: $\{(r,\theta): 0<r<1,\, 1<\theta <2\pi\}$ in the polar coordinate of $\mathbb{R}^2$.
The roughness of the boundary causes essential difficulties in utilizing contact conditions on the difference of the obstacles.
To make matters worse, the Signorini contact conditions impose a ceiling on the regularity of the solutions, $C^{1,\alpha}$-regularity to be exact,
even if the boundary data are smooth. 
To handle these difficulties, we operate using the theory of functions of bounded variations and sets of finite perimeter.
In Section \ref{sec:3}, we briefly review these concepts and show how they apply to our setting. 
Then we prove our main results in Section \ref{sec:4} and \ref{sec:5}.

\subsection{Motivations and related results}

Inverse obstacle problems study the detection of obstacles (or inclusions) from boundary measurements generated by physical fields, such as electrical, acoustic or thermal boundary measurements.
There is vast literature on inverse obstacle problems \cite{Isakov09} and we focus on theoretical results on these problems for elliptic equations. 
From single boundary measurement, one of the first uniqueness proofs for identifying obstacles is due to Schiffer \cite{lax} originally used in inverse scattering. 
A more recent work \cite{Bacchelli09} used this method to prove the unique recovery of obstacle with the Robin boundary condition.
In the case of many boundary measurements, Isakov proved the unique determination of obstacle and isotropic conductivity from the Dirichlet-to-Neumann map \cite{Isakov1988}.
Logarithmic stability estimate for the determination of obstacle was obtained in \cite{Alessandrini05}.
Obstacle detection in isotropic medium using complex geometrical optics solutions was studied in \cite{Hiroshi07,Gunther07,Wang09}.
In the anisotropic case, the unique determination of obstacle was studied in \cite{Kwon04} using fundamental solutions for constant medium, and
\cite{Gunther05} using Runge approximation.
Constructive methods to find obstacles include \cite{Bourgeois10} for single measurement and \cite{Ikehata98} for many measurements.

%{\color{red} [Do we need to include this paragraph? as these inverse problems are very different from the one we consider here?]} 

The Signorini contact condition considered in the present work is a classical way to model frictionless contact of an elastic body.
It defines a free boundary problem in the sense that the actual region of contact is not known a priori and needs to be determined as part of the problem. Free boundary problems naturally occur in physics, for instance in fluid dynamics, contact and fracture mechanics \cite{DL,kikuchi1988contact,eck2005unilateral}. When friction is present, in the dynamic setting, the analysis of contact models becomes more complicated. For (visco)elastic bodies, a physically reasonable Coulomb friction law has been defined in a nonlocal way by mollifying the normal force, which guarantees well-posedness; see \cite{tani2020dynamic} for a detailed model including the Signorini contact condition.
%and one main category of interest has been the obstacle problems \cite{figalli2019regularity,ros2018obstacle,survey-obstacle}.

In general, these problems can be mathematically understood through variational inequalities (or minimization of energy functionals) over a set of constraints, giving rise to a system of differential inequalities instead of the classical Euler-Lagrange equations.
Inverse problems have been extensively studied from the perspective of PDEs and differential geometry, but are almost unknown for differential inequalities mainly due to their nonlinear and complex nature. To the best of our knowledge, our present paper serves as a first study into inverse problems for free boundary contact models.
As many physically relevant problems involve contact conditions, the classical inverse problems for PDEs could have generalizations to (partial) differential inequalities. We hope that the present paper initiates new research directions and develops techniques suitable for the future study of inverse problems for (partial) differential inequalities.

More traditional inverse problems for the constitutive parameters of elastic materials have a long history. 
Inverse problems of determining the stiffness tensor in the elasticity system from boundary measurements have been widely studied for both static and dynamic cases.
For the static problem, the determination is only known for isotropic cases when the Lam\'e parameters are close to constants \cite{Nakamura93,Gunther94,Nakamura03,Eskin02,Gunther12}, and the stability was analyzed in \cite{Beretta14}. For the dynamic problem, the unique determination of stiffness tensor is known for isotropic elasticity
\cite{Rachele00,Rachele00_1,Maarten17,Stefanov18}.  The dynamic inverse rupture problem in an isotropic elastic medium, and its stability, were analyzed in \cite{Maarten23}. The determination of anisotropic elasticity was studied in \cite{Mazzucato06,Maarten19,de2020recovery,Lauri20} and recent works \cite{Joonas23,ilmavirta2023gauge}.

\medskip
\noindent {\bf Acknowledgement.} 
The authors thank Laurent Bourgeois and J\'er\'emi Dard\'e for helpful discussions.
MVdH was supported by the Simons Foundation under the MATH + X program, the National Science Foundation under grant DMS-2108175, and the corporate members of the Geo-Mathematical Imaging Group at Rice University.
M.L. and J.L. were supported by the PDE-Inverse project of the European Research Council of the European Union, project 
101097198, and the Research Council of Finland, grants 273979 and 284715. 
L.O. was supported by the European Research Council of the European Union, grant 101086697 (LoCal), and the Research Council of Finland, grants 359182, 347715 and 353096.
Z.Z. was supported by the Finnish Ministry of Education and Culture’s Pilot for Doctoral Programmes (Pilot project Mathematics of Sensing, Imaging and Modelling).
Views and opinions expressed are those of the authors only and do not necessarily reflect those of the European Union or the other funding organizations.

\section{Preliminary constructions}
\label{sec:2}

The basic idea of our method is to consider the elasticity system on the difference of the obstacles $O_1\setminus O_2$ with the Signorini conditions on its boundary, and produce a contradiction with the exterior boundary measurement through unique continuation.
However, as explained above, the set $O_1\setminus O_2$ may be complicated even if $O_1,O_2$ have smooth boundary. Moreover, the complement of $O_1\cup O_2$ may be disconnected, see Figure \ref{fig_domains} (left), which creates an inner region not accessible from the exterior boundary $\partial \Omega$.
This makes it difficult to determine the shape in the inaccessible region from exterior boundary measurements.

\begin{figure}[h]
\label{fig_domains}
\begin{minipage}[h]{0.45\linewidth}
    \centering
    \includegraphics[width=6cm]{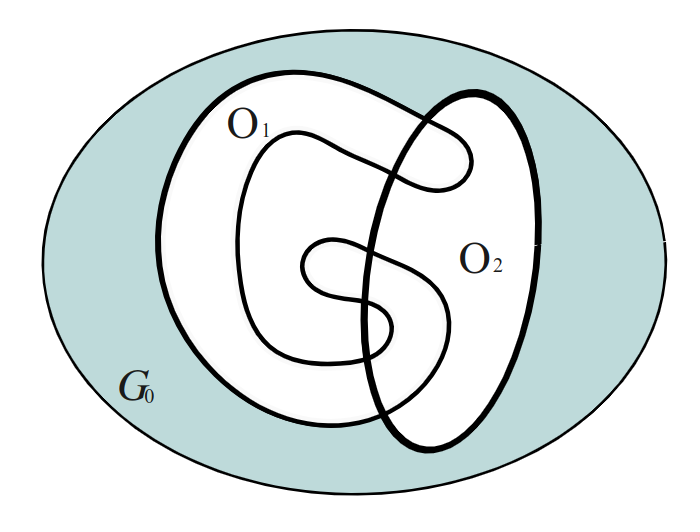}
\end{minipage}
\hfill
\begin{minipage}[h]{0.45\linewidth}
    \includegraphics[width=6cm]{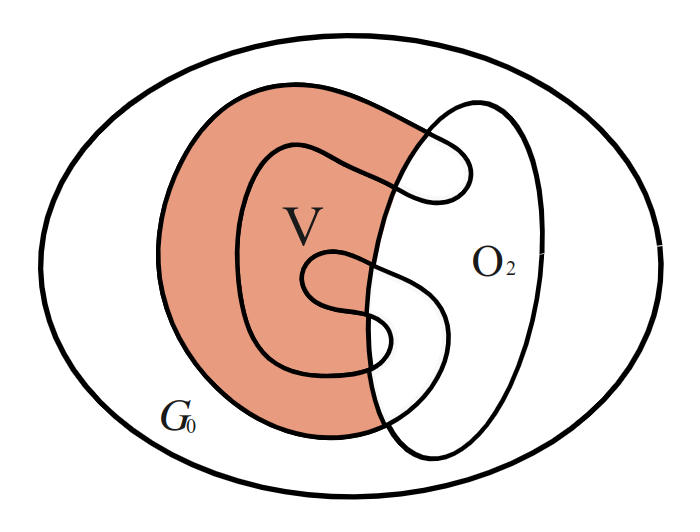}
\end{minipage}
\caption{An illustration of the sets $G_0$ and $\mathcal{V}$.}
\end{figure}

In this section, we construct appropriate domains to carry out our method.
Let $\Omega\subset \R^n$ be a bounded open set with smooth boundary, and let $O_1,O_2$ be two open subsets with smooth boundary satisfying:
\vspace{-1mm}
%One of assumption (2) can be assumed without loss of generality.
\begin{enumerate}
  \item[1.] $O_1\subset\subset \Omega$ and $O_2\subset\subset \Omega$,
  \item[2.] $O_1\not\subset O_2$,
  \item[3.] $\Omega\setminus \overline{O_1}$ and $\Omega\setminus \overline{O_2}$ are connected.
\end{enumerate}
\noindent We define the following sets:
\begin{align} 
  G_0:= \textrm{the connected component of } \Omega\setminus (\overline{O_1\cup O_2}) \textrm{ such that } \p \Omega\subset \p G_0, \label{def-G0} \\
  \V:= \textrm{a connected component of } (\Omega\setminus \overline{G_0})\setminus \overline{O_2} \textrm{ satisfying } \p\V \cap \p G_0\neq \emptyset. \label{def-Vset}
\end{align}
An illustration of these sets is given in Figure \ref{fig_domains}.

Roughly speaking, $G_0$ represents the largest domain in which unique continuation can be propagated from exterior boundary data, and $\V$ is a set containing a connected component of $O_1\setminus \overline{O_2}$ that is connected to the exterior boundary.
%We note that the set $G_0$ is nonempty and unique due to our setting. 
However, it is not clear from the definitions if there exists a connected component satisfying the definition of $\mathcal{V}$. In the following (Lemma \ref{prop:2}) we show that one can always have a nonempty choice of the set $\mathcal{V}$.\par

  \begin{lemma}
    \label{prop:1}
    $\quad \p G_0\cap (\p O_1\b \p O_2)\neq \emptyset$.
  \end{lemma}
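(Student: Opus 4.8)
The plan is to argue by contradiction, extracting the desired point from the topological structure of the component $G_0$. I first record where the interior part of $\partial G_0$ can live. Since $O_1,O_2\subset\subset\Omega$, both $\partial O_1$ and $\partial O_2$ lie in the open set $\Omega$, so any point I am hunting for must lie in $\Omega$. If $p\in\partial G_0\cap\Omega$, then $p$ is a limit of points of $G_0\subset\Omega\setminus\overline{O_1\cup O_2}$; were $p$ itself in the open set $\Omega\setminus\overline{O_1\cup O_2}$, it would lie in the same connected component as the nearby points of $G_0$, i.e. in $G_0$, contradicting $p\in\partial G_0$. Hence $p\in\overline{O_1\cup O_2}$ while being approached from the complement, so $p\in\partial(O_1\cup O_2)\subset\partial O_1\cup\partial O_2$. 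This yields the basic inclusion $\partial G_0\cap\Omega\subset\partial O_1\cup\partial O_2$.

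Now suppose, for contradiction, that $\partial G_0\cap(\partial O_1\setminus\partial O_2)=\emptyset$. Combined with the inclusion just obtained, every interior boundary point of $G_0$ then lies in $\partial O_2$, so $\partial G_0\cap\Omega\subset\partial O_2\subset\overline{O_2}$. The heart of the argument is to play this against the connectedness of $\Omega\setminus\overline{O_2}$ granted by hypothesis (3). I regard $G_0$ as a subset of the connected open set $\Omega\setminus\overline{O_2}$ (indeed $G_0\subset\Omega\setminus\overline{O_1\cup O_2}\subset\Omega\setminus\overline{O_2}$). It is open, being a component of an open set; I claim it is also closed in $\Omega\setminus\overline{O_2}$. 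If $p$ lies in the closure of $G_0$ and in $\Omega\setminus\overline{O_2}$, then either $p\in G_0$ or $p\in\partial G_0\cap\Omega\subset\overline{O_2}$; the latter is impossible since $p\notin\overline{O_2}$, so $p\in G_0$.

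Since $G_0$ is nonempty (it meets a collar of $\partial\Omega$, where no obstacle is present) and clopen in the connected set $\Omega\setminus\overline{O_2}$, I conclude $G_0=\Omega\setminus\overline{O_2}$. But $G_0$ also avoids $\overline{O_1}$, because $G_0\subset\Omega\setminus\overline{O_1\cup O_2}$; thus $\overline{O_1}\cap(\Omega\setminus\overline{O_2})=\emptyset$, i.e. $\overline{O_1}\subset\overline{O_2}$. As $O_1$ is open and $O_2$ has smooth boundary, so that $\mathrm{int}\,\overline{O_2}=O_2$, this forces $O_1\subset O_2$, contradicting hypothesis (2). Therefore the contradiction hypothesis fails and $\partial G_0\cap(\partial O_1\setminus\partial O_2)\neq\emptyset$.

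I expect the main obstacle to be the first step: pinning down that the interior portion of $\partial G_0$ is contained in $\partial(O_1\cup O_2)$, which relies on the careful (but elementary) observation that a boundary point of a connected component of an open set cannot itself lie in that open set. Once this is clean, the remainder is a standard clopen/connectedness argument, the only further points to watch being the inclusion $\partial(O_1\cup O_2)\subset\partial O_1\cup\partial O_2$ (equality may fail, but the inclusion suffices) and the smooth-boundary fact $\mathrm{int}\,\overline{O_2}=O_2$ used to upgrade $O_1\subset\overline{O_2}$ to $O_1\subset O_2$.
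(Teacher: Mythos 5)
Your proof is correct and follows essentially the same route as the paper's: assume $\p G_0\cap(\p O_1\setminus\p O_2)=\emptyset$, deduce $\p G_0\subset\p(\Omega\setminus\overline{O_2})$, conclude $G_0=\Omega\setminus\overline{O_2}$ by connectedness (the paper cites its Appendix Lemma \ref{lm:1} where you run the clopen argument inline), and then use the smoothness of $\p O_2$ to upgrade $O_1\subset\overline{O_2}$ to $O_1\subset O_2$, contradicting hypothesis (2). The only differences are presentational (your $\mathrm{int}\,\overline{O_2}=O_2$ step versus the paper's explicit point $p\in O_1\cap\p O_2$), not substantive.
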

  \begin{proof}
  Suppose $\p G_0\cap (\p O_1\b \p O_2)= \emptyset$, then $\p G_0\subset \p \Omega\cup \p O_2=\p(\Omega\b \o2)$. According to Lemma \ref{lm:1} in Appendix \ref{apx}, there holds $G_0=\Omega\b \o2$. However, by definition we have $G_0\subset \Omega\setminus (\overline{O_1\cup O_2})=(\Omega\setminus \o2)\setminus (O_1\setminus\o2)$ and it is sufficient to show that $O_1\setminus \o2\neq \emptyset$. Indeed, suppose $O_1\setminus \o2= \emptyset$, then $O_1\subset O_2\cup \p O_2$ and there exists a point $p\in O_1\cap \p O_2$ as $O_1\not\subset O_2$. Since $\p O_2$ is smooth, any neighborhood $U$ of $p$ intersects $\Omega\setminus \overline{O_2}$ and for sufficiently small neighborhood $U_p$ there holds $U_p\subset O_1$. Thus $\emptyset\neq U_p\cap (\Omega\setminus \overline{O_2})\subset O_1\cap (\Omega\setminus \overline{O_2})$, which contradicts with $O_1\subset \overline{O_2}$.
  \end{proof}\par
  \begin{lemma}
      \label{lm:new}
      For any point $z\in \p G_0\cap (\p O_1\b \p O_2)$, there exists a small neighborhood $U_z$ of $z$ such that $U_z\setminus \overline{O_1}\subset G_0$.
  \end{lemma}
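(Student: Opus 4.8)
The plan is to first locate $z$ relative to $O_2$, then use the smoothness of $\partial O_1$ to construct the neighborhood, and finally pin down the correct connected component using the hypothesis $z\in\p G_0$. The whole argument is point-set topological, so I would keep track carefully of closures versus interiors.

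First I would note that, since $z\in \p O_1\setminus \p O_2$, the point $z$ does not lie on $\p O_2$, so it is either interior or exterior to $O_2$. The interior case can be excluded: if $z\in O_2$, then some neighborhood of $z$ lies entirely in the open set $O_2$ and is therefore disjoint from $G_0$, because by definition $G_0\subset \Omega\setminus\overline{O_1\cup O_2}\subset \Omega\setminus O_2$. This would contradict $z\in \p G_0\subset \overline{G_0}$, which says every neighborhood of $z$ meets $G_0$. Hence $z\notin \overline{O_2}$, and since $\overline{O_2}$ is closed, some ball about $z$ avoids $\overline{O_2}$ altogether.

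Next, using that $\p O_1$ is a smooth hypersurface near $z$, I would take $U_z=B(z,r)$ with $r$ small enough that simultaneously $U_z\subset\Omega$ (possible since $z\in\overline{O_1}\subset\Omega$), $U_z\cap\overline{O_2}=\emptyset$, and $U_z\setminus\overline{O_1}$ is connected; the last point holds because a sufficiently small ball centered at a smooth boundary point of $O_1$ is split by $\p O_1$ into two connected pieces, the interior and the exterior ``half-balls''. With this choice, $U_z\setminus\overline{O_1}$ avoids both $\overline{O_1}$ and $\overline{O_2}$ while lying in $\Omega$, so $U_z\setminus\overline{O_1}\subset \Omega\setminus\overline{O_1\cup O_2}$. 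Being connected, it must be contained in a single connected component of $\Omega\setminus\overline{O_1\cup O_2}$.

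The final and most delicate step is to identify that component as $G_0$ itself, and this is where I expect the only real content of the argument to sit. Here I would invoke $z\in\overline{G_0}$ once more: the neighborhood $U_z$ meets $G_0$, and since $G_0$ is disjoint from $\overline{O_1}$, this intersection in fact lies in $U_z\setminus\overline{O_1}$. Thus the connected set $U_z\setminus\overline{O_1}$ meets the component $G_0$ and is therefore contained in it, which is the desired conclusion. The main obstacle is precisely this component-identification: a priori $U_z\setminus\overline{O_1}$ could belong to some other, possibly inaccessible, component of $\Omega\setminus\overline{O_1\cup O_2}$, and the crux is recognizing that the hypothesis $z\in\p G_0$ is exactly what rules this out by forcing a nonempty intersection with $G_0$.
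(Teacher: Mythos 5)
Your proposal is correct and follows essentially the same route as the paper: exclude $z\in\overline{O_2}$ using $z\in\p G_0$ and $z\notin\p O_2$, use smoothness of $\p O_1$ to get a small neighborhood $U_z$ with $U_z\cap\overline{O_2}=\emptyset$ and $U_z\setminus\overline{O_1}$ connected, and then identify the component as $G_0$ because $z\in\overline{G_0}$ forces $U_z\setminus\overline{O_1}$ to meet $G_0$. The paper phrases the last step with a sequence $z_j\to z$ in $G_0$ and paths inside $U_z\setminus\overline{O_1}$, which is the same component-identification argument you give.
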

  \begin{proof}
      First we show that $z\not\in \overline{O_2}$. If this is not true, we have $z\in O_2$ as $z\not\in \p O_2$ by setting. However, this contradicts with $z\in \p G_0\subset \overline{G_0}\subset \overline{\Omega}\setminus(O_1\cup O_2)$.\par
      Since $O_1$ is smooth and $z\in \p O_1$, for small enough neighborhood $U_z$ of $z$ there holds $U_z$ is a local coordinate chart near $z$ and $U_z\cap \overline{O_2}=\emptyset$. To simplify the notation, we write $U_z^-:= U_z\setminus \overline{O_1}$. Since $z\in \p G_0$, there exists a sequence of points $z_j\in G_0$ such that $z_j\to z$. Then for sufficiently large $J$ we have $z_J\in U_z^-$, since $G_0\cap \overline{O_1}=\emptyset$ by definition. Thus, for any $p\in U_z^-$, one can connect $p$ with $z_J$ by a path in $U_z^-$. 
        %because $U_z$ is two-side collar coordinate neighborhood.
        Since $U_z^-$ does not intersect with $\overline{O_1}$ or $\overline{O_2}$, 
        %so that the path is contained in $\Omega\setminus (\overline{O_1\cup O_2})$.
        we see that $p\in G_0$, which proves the statement as $p\in U_z^-$ is arbitrary.
  \end{proof}
  Lemma \ref{prop:1} indicates that $\p G_0\setminus \p O_2$ is not empty, which allows us to construct a non-empty $\V$.

  \begin{lemma}
    \label{prop:2}
    There is a nonempty connected component $\V$ of $(\Omega\setminus \overline{G_0})\setminus \overline{O_2}$ satisfying $\p\V \cap \p G_0\neq \emptyset$.
  \end{lemma}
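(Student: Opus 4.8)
The plan is to produce an explicit point lying simultaneously on $\p G_0$ and on the boundary of a suitable connected component of $(\Omega\b\overline{G_0})\b\o2$. First I would invoke Lemma \ref{prop:1} to select a point $z\in \p G_0\cap(\p O_1\b\p O_2)$, and then apply Lemma \ref{lm:new} to obtain a small neighborhood $U_z$ of $z$ with $U_z\b\overline{O_1}\subset G_0$. From the proof of Lemma \ref{lm:new} I would also record that $U_z$ may be taken to be a coordinate chart with $U_z\cap\o2=\emptyset$, so that $U_z$ splits smoothly across the hypersurface $U_z\cap\p O_1$ into the two pieces $U_z\cap O_1$ and $U_z\b\overline{O_1}$.

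The key observation is that the inner piece $U_z\cap O_1$ lies in the target set $W:=(\Omega\b\overline{G_0})\b\o2$. Indeed, any $p\in U_z\cap O_1$ has a whole neighborhood contained in the open set $O_1$, which is disjoint from $G_0$ because $G_0\cap\overline{O_1}=\emptyset$ by the definition of $G_0$; hence $p\notin\overline{G_0}$. Combined with $U_z\cap\o2=\emptyset$, this gives $U_z\cap O_1\subset W$. Since $z\in\p O_1$ and $O_1$ is smooth, the set $U_z\cap O_1$ is nonempty and, after shrinking $U_z$, connected like a half-ball; in particular $W\neq\emptyset$. I would then let $\V$ be the connected component of the open set $W$ that contains $U_z\cap O_1$.

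Finally I would verify the boundary condition $\p\V\cap\p G_0\neq\emptyset$ by showing $z\in\p\V$. The point $z$ lies in the closure of $U_z\cap O_1\subset\V$, so $z\in\overline{\V}$; on the other hand $z\in\p G_0\subset\overline{G_0}$ forces $z\notin W\supset\V$. Since $\V$ is open, $z\in\overline{\V}\b\V=\p\V$, and as $z\in\p G_0$ as well, this yields $z\in\p\V\cap\p G_0$ and completes the argument.

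The main obstacle is the step in the second paragraph: verifying that points of $O_1$ near $z$ escape $\overline{G_0}$, rather than merely escaping $G_0$. This is where the structural fact $G_0\cap\overline{O_1}=\emptyset$ is essential, since it upgrades ``disjoint from $G_0$'' to ``outside the closure of $G_0$'' via the openness of $O_1$. The remaining points, namely the nonemptiness and local connectedness of $U_z\cap O_1$ and the open/closed bookkeeping identifying $z$ as a boundary point of $\V$, are routine once $U_z$ is chosen small enough to be a coordinate chart avoiding $\o2$.
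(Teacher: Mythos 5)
Your proof is correct and follows essentially the same route as the paper's: pick a point of $\p G_0\cap(\p O_1\setminus \p O_2)$ via Lemma \ref{prop:1}, show that a small nonempty connected piece of $O_1$ near that point lies in $(\Omega\setminus\overline{G_0})\setminus\overline{O_2}$, take $\V$ to be the connected component containing that piece, and verify the point lies in $\p\V\cap\p G_0$. The only cosmetic difference is that you recycle the coordinate neighborhood from the proof of Lemma \ref{lm:new} where the paper constructs a ball avoiding $\overline{O_2}$ directly, and you are slightly more explicit than the paper about why points of $O_1$ avoid $\overline{G_0}$ rather than merely $G_0$.
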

  \begin{proof}
  According to Lemma \ref{prop:1}, there exists a point $p\in \p G_0\cap (\p O_1\b \p O_2)$, and we can find an open ball $B(p,\delta)$ centered at $p$ with radius $\delta$ such that $B(p,\delta)\cap \o2=\emptyset$. If such ball does not exist, that is, for any $\varepsilon>0$, there holds $B(p,\varepsilon)\cap \o2\neq\emptyset$. This means that $p\in \o2$. Notice that $p\not\in \p O_2$, then $p\in O_2$ and there is an neighborhood $U_p$ of $p$ such that $U_p\subset O_2$ and $U_p\cap G_0=\emptyset$, then $p\not\in \p G_0$, a contradiction. \par
  Furthermore, since $O_1$ is a domain with smooth boundary and $p\in \p O_1$, there exists $\eta>0$ such that $B(p,\eta)\cap O_1$ is connected by the local connectedness of $\R^n$. Then we choose $\gamma=\min\{\delta,\eta\}$, thus $\left(B(p,\gamma)\cap (\Omega\b \overline{G_0})\right)\subset (\Omega\b \overline{G_0})\b \o2$, and we can let $\V$ to be a connected component of $(\Omega\b \overline{G_0})\b \o2$ containing $B(p,\gamma)\cap O_1$. It remains to verify that $p\in \p \V$. Since $p\in \p O_1$, then we can find a sequence $\{x_n\}_{n\geq 1}\subset O_1$ that converges to $p$, and there exists a number $N$ such that $\{x_n\}_{n\geq N}\subset B(p,\gamma)$. Thus $\{x_n\}_{n\geq N}\subset B(p,\gamma)\cap O_1\subset \V$, and $p\in \overline{\V}$. To get a contradiction, assume $p\in \V$, then $p\in \Omega\b \overline{G_0}$, and in particular $p \not\in\p G_0$.
  \end{proof}
  
  Before concluding this section, we state the following lemma, which plays an important role in proving the main results. The lemma enables us to connect $\V$ to $G_0$, as well as $\p\Omega$, through a path that does not pass $O_2$. %In other words, the following lemma shows that $\V$ and $G_0$ are still connected in $\Omega\setminus\overline{O_2}$.
    \begin{lemma}
        \label{V-boundary}
        $\quad \emptyset \neq\, \partial \mathcal{V} \setminus \partial O_2 \subset \partial G_0\cap \p O_1.$
    \end{lemma}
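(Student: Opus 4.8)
The plan is to prove the two assertions separately: the set inclusion $\partial\V\b\p O_2\subset\p G_0\cap\p O_1$, and the nonemptiness of $\partial\V\b\p O_2$. The main tool for the inclusion is the elementary fact that if $C$ is a connected component of an open set $W\subset\R^n$, then $\partial C\subset\partial W$. Applied with $W=(\Omega\b\overline{G_0})\b\o2$, and using $\partial(\overline{G_0})\subset\p G_0$ and $\partial(\o2)=\p O_2$, this confines $\p\V$ to the union $\p\Omega\cup\p G_0\cup\p O_2$. I would then peel off the irrelevant pieces one at a time.

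For the inclusion, first I would show $\p\V\cap\p\Omega=\emptyset$. Given $x_0\in\p\Omega$, choose a small neighborhood $U$ with $U\cap\overline{O_1\cup O_2}=\emptyset$ (possible since $O_1,O_2\subset\subset\Omega$) and with $U\cap\Omega$ connected. Because $\p\Omega\subset\p G_0$ by the definition \eqref{def-G0}, $G_0$ accumulates at $x_0$, so $U\cap\Omega$ meets $G_0$; being connected and contained in $\Omega\b\overline{O_1\cup O_2}$, it must lie in the single component $G_0$. Since $\V$ is disjoint from $G_0$, this forces $\V\cap U=\emptyset$, so $x_0\notin\p\V$. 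Combining this with the containment $\p\V\subset\p\Omega\cup\p G_0\cup\p O_2$ from the first paragraph gives $\p\V\subset\p G_0\cup\p O_2$, hence $\p\V\b\p O_2\subset\p G_0$. Finally, since $G_0$ is a component of $\Omega\b\overline{O_1\cup O_2}$ we have $\p G_0\subset\p\Omega\cup\p O_1\cup\p O_2$; so for $x\in\p\V\b\p O_2$ we know $x\in\p G_0$, $x\notin\p\Omega$, and $x\notin\p O_2$, leaving $x\in\p O_1$. This yields $\p\V\b\p O_2\subset\p G_0\cap\p O_1$.

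For nonemptiness, I would return to the construction underlying $\V$. By Lemma \ref{prop:1} there is a point $p\in\p G_0\cap(\p O_1\b\p O_2)$, and the proof of Lemma \ref{prop:2} chooses $\V$ precisely as the component of $(\Omega\b\overline{G_0})\b\o2$ accumulating at such a $p$, establishing $p\in\overline{\V}\b\V=\p\V$. Since $p\notin\p O_2$ by the choice coming from Lemma \ref{prop:1}, we obtain $p\in\p\V\b\p O_2$, so the set is nonempty (and, as it must be, consistent with the inclusion just proved since $p\in\p G_0\cap\p O_1$).

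I expect the delicate point to be the nonemptiness rather than the inclusion. The inclusion is a clean bookkeeping argument with component boundaries, whereas nonemptiness genuinely relies on locating a boundary contact point of $\V$ with $G_0$ that avoids the hypersurface $\p O_2$. In full generality this is obstructed by possible tangencies of $\p O_1$ and $\p O_2$, which is exactly why selecting $p\in\p O_1\b\p O_2$ in advance (via Lemma \ref{prop:1}) and building $\V$ around it is the right move: it sidesteps any analysis of the local geometry where the two smooth boundaries meet.
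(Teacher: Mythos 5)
Your proof of the inclusion $\p\V\setminus\p O_2\subset\p G_0\cap\p O_1$ is correct and follows essentially the same lines as the paper's: confine $\p\V$ to $\p\Omega\cup\p G_0\cup\p O_2$ via component boundaries, eliminate $\p\Omega$ using the fact that $G_0$ fills a neighbourhood of $\p\Omega$ inside $\Omega$, and extract membership in $\p O_1$ from $\p G_0\subset\p\Omega\cup\p O_1\cup\p O_2$ (the paper gets this last point instead from $\p\V\subset\p O_1\cup\p O_2$, an immaterial difference).

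The nonemptiness part is where you diverge, and where there is a gap in scope. Definition \eqref{def-Vset} declares $\V$ to be \emph{any} connected component of $(\Omega\setminus\overline{G_0})\setminus\overline{O_2}$ with $\p\V\cap\p G_0\neq\emptyset$; Lemma \ref{prop:2} only shows that at least one such component exists, it does not canonically fix $\V$ to be the component accumulating at the point $p$ of Lemma \ref{prop:1}. Your argument establishes $p\in\p\V\setminus\p O_2$ only for that particular component. For a general admissible $\V$ you only know that $\p\V$ meets $\p G_0$ somewhere, and that meeting point could a priori also lie on $\p O_2$ (e.g.\ at a tangency of $\p O_1$ and $\p O_2$), so nonemptiness of $\p\V\setminus\p O_2$ does not follow from your argument. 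The paper handles the general case by contradiction: if $\p\V\setminus\p O_2=\emptyset$, then $\p\V\subset\p O_2\subset\p(\Omega\setminus\overline{O_2})$, and Lemma \ref{lm:1} together with the assumed connectedness of $\Omega\setminus\overline{O_2}$ forces $\V=\Omega\setminus\overline{O_2}$, which is impossible because the nonempty set $G_0$ lies in $\Omega\setminus\overline{O_2}$ yet is disjoint from $\V$. Your version can be repaired either by adopting this argument or by explicitly fixing $\V$ once and for all to be the component constructed in Lemma \ref{prop:2} (which would suffice for the paper's main theorems, but is not what the lemma as stated asserts).
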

\begin{proof}
 Notice that
  \begin{equation}
  \label{eq:lm_3}
    \p\V\subset \p(\Omega \setminus \overline{G_0})\cup \p(\Omega \setminus \o2)\subset \p \Omega\cup \p G_0\cup \p O_2\subset \p\Omega\cup \p O_1 \cup \p O_2.
  \end{equation}
    Since $G_0$ contains a neighborhood of $\p \Omega$ by the local connectedness, we have $\p \V\cap \p\Omega=\emptyset$ and $\p\V\subset\p O_1\cap\p O_2$. Assume $\p \V\setminus \p O_2=\emptyset$, then $\p \V\subset \p O_2\subset \p (\Omega\b \o2)$. By Lemma \ref{lm:1} in Appendix \ref{apx}, $\V=\Omega\b \o2$ as $\Omega\setminus \overline{O_2}$ is connected, which is a contradiction with $G_0\neq \emptyset$.\par
    Moreover, \eqref{eq:lm_3} implies that for any $z\in \partial \mathcal{V} \setminus \partial O_2$ satisfies that $z\in \partial (\Omega\setminus G_0)\subset \partial \Omega\cup \partial G_0$. Thus $\partial \mathcal{V} \setminus \partial O_2 \subset \partial G_0$ follows immediately from the fact $\p\V\cap\p\Omega=\emptyset$.
\end{proof}

    The boundary of $\mathcal{V}$ essentially consists of unions and intersections of manifold boundaries. In general, even when the manifold boundaries are smooth, their unions or intersections can behave wildly, and $\p\V$ is not necessarily piecewise smooth. In fact, $\V$ is a set of finite perimeter and the measure-theoretic unit normal on its reduced boundary is well-defined. We will briefly review these concepts in the next section, and show that the measure-theoretic unit normal on the reduced boundary of $\V$ coincides with that of manifold boundaries almost everywhere. 
%    We explain these concepts in the next section.

\section{Set of finite perimeter and reduced boundary}
\label{sec:3}

In this section, our primary objective is to show that $\V$ is a set of finite perimeter and that its measure-theoretic unit normal coincides with that of $O_1$ or $O_2$, possibly differing by a sign.
For self-containedness, we briefly review basic concepts on sets of finite perimeter in geometric measure theory. 
%The classical approach to these concepts is in the context of functions of bounded variation.

Let $u\in L^1(\Omega)$, we say that $u$ is a function of bounded variation in $\Omega$, written as $u\in BV(\Omega)$, if its distributional gradient $Du$ is a finite $\R^n$ vector-valued Radon measure $Du=(\mu_1,\mu_2,\cdots,\mu_n)$ such that
\begin{equation*}
    \int_\Omega u\frac{\p \varphi}{\p x_i}dx=-\int_\Omega \varphi d\mu_i,\quad \forall \varphi\in C_0^\infty(\Omega),\ i=1,\cdots,n,
\end{equation*}
and
\begin{equation*}
    \abs{Du}(\Omega):=\sup\left\{\int_\Omega u\, \div \varphi\ dx\ \big|\ \varphi\in\left(C_0^\infty(\Omega) \right)^n,\ \norm{\varphi}_{L^\infty(\Omega)}\leq 1 \right\}<\infty.
\end{equation*}
Let $E$ be an measurable subset of $\R^n$. For any open set $\Omega\subset \R^n$, $E$ is said to be a set of finite perimeter in $\Omega$ if the characteristic function $\chi_E\in BV(\Omega)$. When $E$ is of finite perimeter in $\R^n$, it is simply called a set of finite perimeter. \par
For a set $E$ of finite perimeter, the \emph{reduced boundary} of $E$, denoted by $\p^* E$, is defined as the set of all points $x\in \R^n$ such that $\abs{D\chi_E}(B(x,r))>0$ for all $r>0$ and the limit
\begin{equation*}
    \nu_E(x):=\lim_{r\to 0}\frac{D\chi_E(B(x,r))}{\abs{D\chi_E}(B(x,r))}
\end{equation*}
exists in $\R^n$ with $\abs{\nu_E(x)}=1$. 
The vector $\nu_E(x)$ is called the \emph{measure-theoretic unit normal} to $E$ at $x$.\par
The set of points of density $t$ of $E$ is defined as
\begin{equation}
    E^{(t)}=\left\{x\in \R^n\mid \lim_{r\to0^+}\frac{\abs{E\cap B(x,r)}}{\abs{B(x,r)}}=t \right\},
\end{equation}
It is clear that the interiors of $E$ and $\R^n\setminus E$ satisfy $E^\circ\subset E^{(1)}$ and $(\R^n\setminus E)^\circ\subset E^{(0)}$, respectively. Moreover, Federer's theorem (see e.g. \cite[Theorem 16.21]{maggi} or \cite[theorem 3.61]{ambrosio}) states that $\p^* E\subset E^{(1/2)}$.

Observe that $\Omega$, $O_1$ and $O_2$ are all sets of finite perimeter since they are open sets with smooth boundary (see e.g. \cite[Remark 5.4.2]{ziemer}) and their reduced boundaries coincide with the topological boundaries. The following lemma states that the set $\V$ defined in Section \ref{sec:2} is a set of finite perimeter.

  \begin{lemma}
    \label{prop:3}
    The set $\V$ defined by \eqref{def-Vset} is a set of finite perimeter.
  \end{lemma}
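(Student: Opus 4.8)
The plan is to avoid trying to realize $\V$ as a finite Boolean combination of sets of finite perimeter. Although $\Omega,O_1,O_2$ are all sets of finite perimeter and this class is stable under finite unions, intersections and differences, both $G_0$ and $\V$ are defined as \emph{connected components}, and a connected component of a set of finite perimeter need not itself have finite perimeter. Instead I would control the topological boundary $\p\V$ directly and invoke Federer's criterion, which characterizes finite perimeter through the size of the essential boundary.

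First I would record that $\V$ is a bounded open set: being a connected component of the open set $(\Omega\b\overline{G_0})\b\o2$ it is open, and it is contained in the bounded set $\Omega$, so it is Lebesgue measurable with $\abs{\V}<\infty$. Next I would bound its boundary. Exactly as in the proof of Lemma \ref{V-boundary},
\begin{equation*}
\p\V\subset \p(\Omega\b\overline{G_0})\cup\p(\Omega\b\o2)\subset \p\Omega\cup\p G_0\cup\p O_2.
\end{equation*}
Since $G_0$ is a connected component of the open set $\Omega\b\overline{O_1\cup O_2}$, its boundary satisfies $\p G_0\subset \p(\Omega\b\overline{O_1\cup O_2})\subset \p\Omega\cup\p O_1\cup\p O_2$. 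Combining these inclusions with $\p\V\cap\p\Omega=\emptyset$ (established in Lemma \ref{V-boundary}) gives
\begin{equation*}
\p\V\subset \p O_1\cup\p O_2.
\end{equation*}
As $O_1,O_2$ have smooth boundary and are compactly contained in $\Omega$, the sets $\p O_1,\p O_2$ are compact smooth hypersurfaces, hence $\H^{n-1}(\p O_i)<\infty$ and therefore $\H^{n-1}(\p\V)\leq \H^{n-1}(\p O_1)+\H^{n-1}(\p O_2)<\infty$.

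Finally I would pass from the topological boundary to the essential boundary $\p^e\V:=\R^n\b(\V^{(0)}\cup \V^{(1)})$. For any measurable set $E$ one has $\p^e E\subset \p E$, because interior points of $E$ lie in $E^{(1)}$ and interior points of $\R^n\b E$ lie in $E^{(0)}$, so every point of $\p^e E$ is a topological boundary point. Thus $\H^{n-1}(\p^e\V)\leq \H^{n-1}(\p\V)<\infty$. By Federer's criterion for sets of finite perimeter (see e.g. \cite{maggi,ambrosio}), a Lebesgue measurable set of finite volume whose essential boundary has finite $\H^{n-1}$-measure is of finite perimeter; applying this to $\V$ concludes the proof.

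The only genuinely delicate point is the first one: the connected-component definitions of $G_0$ and $\V$ obstruct the routine stability argument for Boolean operations, which forces the indirect route through the size of the boundary. Once Lemma \ref{V-boundary} supplies the inclusion $\p\V\subset\p O_1\cup\p O_2$, the smoothness of $\p O_1,\p O_2$ makes the $\H^{n-1}$-bound immediate and Federer's criterion does the rest.
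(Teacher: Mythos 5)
Your proof is correct and takes essentially the same approach as the paper: both establish $\p\V\subset\p O_1\cup\p O_2$, bound $\H^{n-1}(\p\V)$ by the measures of the smooth obstacle boundaries, and conclude via the criterion that a set whose boundary has finite $\H^{n-1}$-measure has finite perimeter. The paper simply cites \cite[Proposition 3.62]{ambrosio} for this last step, which is the same Federer-type criterion you spell out explicitly through the essential boundary.
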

  \begin{proof}
  Since $O_1$ and $O_2$ have smooth boundary and $\p\V\subset \p O_1\cup \p O_2$, there holds
%The boundary of a connected component of $A \subset \mathbb{R}^n$ is a subset of $\partial A$.
  \begin{equation*}
    \H^{n-1}(\p\V)\leq \H^{n-1}(\p O_1\cup \p O_2)<\infty.
  \end{equation*}
  Due to \cite[Proposition 3.62]{ambrosio}, we conclude that $\V$ has finite perimeter.
  \end{proof}

\begin{mdframed}[backgroundcolor=black!4, linewidth=0pt, innerleftmargin=0pt, innerrightmargin=0pt]  
{\bf Notations.}  Let $E$ and $F$ be sets of finite perimeter in $\mathbb{R}^n$. We write
  \begin{align*}
      \left\{\nu_E=\nu_F \right\}&:=\left\{x\in \p^* E\cap\p^* F\mid \nu_E(x)=\nu_F(x) \right\},\\
      \left\{\nu_E=-\nu_F \right\}&:=\left\{x\in \p^* E\cap\p^* F\mid \nu_E(x)=-\nu_F(x) \right\}.
  \end{align*}
 To simplify notations, we write $S_1\simeq S_2$ if $S_1$ and $S_2$ are Borel sets such that $\H^{n-1}\left((S_1\setminus S_2)\cup(S_2\setminus S_1)\right)=0$, with respect to the $(n-1)$-dimensional Hausdorff measure $\H^{n-1}$. We write $S_1\subsetsim S_2$ if $S_1\subset S_2\cup N$ for some Borel set $N$ satisfying $\H^{n-1}(N)=0$.
\end{mdframed}

  \smallskip
  Lemma \ref{V-boundary} shows that $\p \V\setminus \p O_2\subset \p G_0$, it is natural to ask if the reduced boundary of these sets also preserves this relation. We remark that in general, for three Borel sets $A$, $B$ and $C$, $\p A\setminus \p B\subset \p C$ does not implies that $\p^* A\setminus \p^* B\subset \p^* C$ since $\H^{n-1}(\p C\setminus\p^* C)$ may not be $0$. 
  In our setting, the following lemma states that the reduced boundary satisfies this relation up to an $\H^{n-1}$-measure zero set.
  
  \begin{lemma}
  \label{prop:4}
      $\quad \p^* \V\subsetsim \p^* G_0\cup \p^* O_2$, where $G_0,\V$ are defined by $\eqref{def-G0},\eqref{def-Vset}$.
  \end{lemma}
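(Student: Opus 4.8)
The plan is to leverage Lemma \ref{V-boundary}, which gives the topological inclusion $\partial\V\setminus\partial O_2\subset\partial G_0\cap\partial O_1$, and upgrade it to the reduced boundaries up to an $\H^{n-1}$-null set. The key point is that $\partial^*E\subset\partial E$ always holds for a set $E$ of finite perimeter, so one inclusion direction is automatic; the difficulty is the other direction, namely that reduced-boundary points of $\V$ may fail to be reduced-boundary points of $G_0$ or $O_2$ even though they are topological-boundary points. First I would write the chain of set inclusions. Since $\partial^*\V\subset\partial\V$ and $\partial\V\subset(\partial\V\setminus\partial O_2)\cup\partial O_2$, Lemma \ref{V-boundary} gives $\partial^*\V\subset(\partial G_0\cap\partial O_1)\cup\partial O_2$. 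Because $O_1$ and $O_2$ have smooth boundary, $\partial O_1=\partial^*O_1$ and $\partial O_2=\partial^*O_2$ exactly (not merely up to measure zero), so I can already replace $\partial O_1,\partial O_2$ by their reduced counterparts freely.

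The genuine obstacle is thus to pass from $\partial G_0$ to $\partial^*G_0$. The set $G_0$ is a connected component of $\Omega\setminus\overline{O_1\cup O_2}$, so its topological boundary is contained in $\partial\Omega\cup\partial O_1\cup\partial O_2$, but $G_0$ need not have smooth boundary, and $\H^{n-1}(\partial G_0\setminus\partial^*G_0)$ could a priori be positive. The strategy I would use is to argue that the relevant points of $\partial^*\V$ land, up to $\H^{n-1}$-measure zero, in $\partial^*G_0$. Concretely, take $x\in\partial^*\V\setminus\partial^* O_2$. Then $x\in\partial^*\V\subset\V^{(1/2)}$ by Federer's theorem, so $x$ has density exactly $1/2$ with respect to $\V$. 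Since $x\notin\partial O_2=\partial^*O_2$ (after discarding the null set $\partial^*\V\cap\partial^*O_2$ is handled separately) and $O_2$ is smooth with $U_x\cap\overline{O_2}=\emptyset$ for a small ball, locally $\V$ and $G_0$ sit on opposite sides of the smooth hypersurface $\partial O_1$ near $x$, as in Lemma \ref{lm:new}. I would make this precise: for $x\in\partial G_0\cap\partial O_1$ away from $\overline{O_2}$, the smoothness of $\partial O_1$ forces $x$ to have a neighborhood split by $\partial O_1$ into the $O_1$-side (contained in $\V$) and the complementary side (contained in $G_0$), so $x$ has density $1/2$ for both $\V$ and $G_0$, and the measure-theoretic normals exist and agree up to sign with that of $O_1$.

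The main obstacle I expect is controlling the exceptional set where the measure-theoretic normal of $G_0$ fails to exist. To handle this cleanly, I would invoke the smoothness of $\partial O_1$ rather than any regularity of $\partial G_0$ directly: on the portion of $\partial^*\V\setminus\partial O_2$, the boundary is locally the smooth hypersurface $\partial O_1$, so the reduced boundary of $G_0$ there is forced to coincide with $\partial O_1=\partial^* O_1$ up to an $\H^{n-1}$-null set. The formal mechanism is that if $x\in\partial O_1$ has density $1/2$ for both $\V$ and $G_0$ and these two sets fill complementary sides of the smooth surface near $x$, then $\abs{D\chi_{G_0}}(B(x,r))>0$ for all small $r$ and the normalized limit exists, placing $x\in\partial^*G_0$. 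I would therefore conclude $\partial^*\V\setminus\partial^*O_2\subsetsim\partial^*G_0$, hence $\partial^*\V\subsetsim\partial^*G_0\cup\partial^*O_2$, discarding throughout the $\H^{n-1}$-null sets $\partial O_i\setminus\partial^*O_i$ (empty here by smoothness) and any density-exceptional set, which is null by Federer's theorem.
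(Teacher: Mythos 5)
Your proof is correct, and it reaches the conclusion by a genuinely different mechanism than the paper, although both arguments rest on the same geometric input, Lemma \ref{lm:new}. The paper first applies the set-operations theorem for reduced boundaries (\cite[Theorem 16.3]{maggi}) to $\p^*\V=\p^*(\V\setminus G_0)$, which leaves only the piece $G_0^{(0)}\cap\p^*\V$ to control; that piece is shown to lie in $\p O_2$ by contradiction, since a point of it outside $\p O_2$ would, by Lemma \ref{lm:new}, have $O_1$-density $1$, contradicting Federer's theorem ($\p O_1=\p^*O_1\subset O_1^{(1/2)}$). You instead argue pointwise on $\p^*\V\setminus\p O_2$: by Lemmas \ref{V-boundary} and \ref{lm:new} such a point $z$ lies in $\p G_0\cap(\p O_1\setminus\p O_2)$ and has a neighborhood $U_z$ with $U_z\setminus\overline{O_1}\subset G_0$; combined with $G_0\cap\overline{O_1}=\emptyset$ this gives $U_z\cap G_0=U_z\setminus\overline{O_1}$, so $D\chi_{G_0}=-D\chi_{O_1}$ on $U_z$ and the smoothness of $\p O_1$ places $z\in\p^*G_0$ directly. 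This buys a slightly stronger conclusion, namely the genuine inclusion $\p^*\V\subset\p^*G_0\cup\p^*O_2$ with no exceptional null set, and avoids both the decomposition theorem and the density argument. Two small repairs are needed in your write-up: the claim that the $O_1$-side of $\p O_1$ near $z$ is contained in $\V$ is asserted without proof (one would have to check that $U_z\cap O_1$ is a connected subset of $(\Omega\setminus\overline{G_0})\setminus\overline{O_2}$ meeting $\V$), but it is in fact not needed, since only the $G_0$-side enters the verification that $z\in\p^*G_0$; and to obtain $U_z\cap\overline{O_2}=\emptyset$ you need $z\notin\overline{O_2}$ rather than merely $z\notin\p O_2$, which follows because $\overline{\V}$ cannot meet the open set $O_2$, exactly as in the proof of Lemma \ref{lm:new}.
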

  \begin{proof}
      By \cite[Theorem 16.3]{maggi}, we have
      \begin{equation*}
          \p^*\V=\p^*(\V\setminus G_0)\simeq \left(G_0^{(0)}\cap \p^* \V\right)\cup \left(\V^{(0)}\cap \p^* G_0\right)\cup \left\{ \nu_\V=-\nu_{G_0}\right\}.
      \end{equation*}
      It is enough to show that $G_0^{(0)}\cap \p^* \V \subset \p O_2=\p^* O_2$. To get a contradiction, suppose there exists a point $p\in G_0^{(0)}\cap \p^* \V$ and $p\not\in \p O_2$. Since $\p^*\V\subset \p\V\subset \p G_0\cup \p O_2=(\p G_0\cap \p O_1)\cup \p O_2$, there holds $p\in \p G_0\cap (\p O_1\setminus \p O_2)\cap G_0^{(0)}$. Thus, for any $\varepsilon>0$, there exists a small enough neighborhood $U_p^\varepsilon$ of $p$, such that $\frac{\abs{U_p^\varepsilon\cap G_0}}{\abs{U^\varepsilon_p}}\leq\varepsilon$. According to Lemma \ref{lm:new}, we have $U_p^\varepsilon\setminus G_0 \subset \overline{O_1}$. Hence $\frac{\abs{U_p^\varepsilon\cap O_1}}{\abs{U_p^\varepsilon}}=\frac{\abs{U_p^\varepsilon\cap \overline{O_1}}}{\abs{U_p^\varepsilon}}\geq 1-\varepsilon$. However, this contradicts with the fact that $p\in \p^* O_1\subset O_1^{(1/2)}$ due to Federer's theorem.
  \end{proof}
  
    Finally we show that the measure-theoretic unit normal on $\p^*\V$ coincides almost everywhere with the unit normal defined on $\p^* O_1$ or $\p^* O_2$, but with the opposite direction in the latter case. This is an application of the structure theory for sets of finite perimeter.

  \begin{proposition}
  \label{prop:5}
      $\quad \p^* \V\simeq \left\{\nu_{\V}=\nu_{O_1}\right\}\cup \left\{\nu_{\V}=-\nu_{O_2}\right\}.$
  \end{proposition}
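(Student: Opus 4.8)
The plan is to split $\partial^*\V$ according to which manifold boundary a point lies on, and then fix the orientation of the measure-theoretic normal on each piece by a blow-up argument. Since $\partial^*\V\subset\partial\V$ and, by \eqref{eq:lm_3}, $\partial\V\subset\partial O_1\cup\partial O_2=\partial^* O_1\cup\partial^* O_2$ (the last equality because $O_1,O_2$ are smooth), every point of $\partial^*\V$ lies in $\partial^* O_1$ or $\partial^* O_2$. I would therefore write $\partial^*\V=(\partial^*\V\cap\partial^* O_2)\cup(\partial^*\V\setminus\partial^* O_2)$ and treat the two pieces separately. The reverse inclusion $\{\nu_\V=\nu_{O_1}\}\cup\{\nu_\V=-\nu_{O_2}\}\subset\partial^*\V$ is immediate from the definitions in the Notations box, so only the inclusion $\partial^*\V\subsetsim\{\nu_\V=\nu_{O_1}\}\cup\{\nu_\V=-\nu_{O_2}\}$ requires work.

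The key tool is an orientation statement from the structure theory: if $E,F$ have finite perimeter, $x\in\partial^* E\cap\partial^* F$, and $E\subset F$ up to an $\H^n$-null set in some neighborhood of $x$, then $\nu_E(x)=\nu_F(x)$. This follows by blowing up at $x$, where both $\frac{E-x}{r}$ and $\frac{F-x}{r}$ converge in $L^1_{\loc}$ to the half-spaces determined by $\nu_E(x)$ and $\nu_F(x)$; the local containment passes to the limit, so one half-space through the origin contains the other, forcing the normals to coincide (a standard consequence of De Giorgi's blow-up theorem, see \cite[Chapter 15]{maggi} or \cite{ambrosio}). Applying this with $E=\V$ and $F=\R^n\setminus O_2$, and using $\nu_{\R^n\setminus O_2}=-\nu_{O_2}$ together with $\partial^*(\R^n\setminus O_2)=\partial^* O_2$, I get $\nu_\V=-\nu_{O_2}$ for $\H^{n-1}$-a.e.\ point of $\partial^*\V\cap\partial^* O_2$; here the hypothesis holds globally since $\V\cap\overline{O_2}=\emptyset$ by the definition \eqref{def-Vset}. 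This disposes of the first piece.

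For the second piece, let $x\in\partial^*\V\setminus\partial^* O_2$. Then $x\in\partial^* O_1$ (as $\partial^*\V\subset\partial^* O_1\cup\partial^* O_2$), and $x\in\partial\V\setminus\partial O_2$, so Lemma \ref{V-boundary} gives $x\in\partial G_0\cap(\partial O_1\setminus\partial O_2)$. Lemma \ref{lm:new} then supplies a neighborhood $U_x$ with $U_x\setminus\overline{O_1}\subset G_0$; since $\V\cap G_0=\emptyset$ by definition, this forces $\V\cap U_x\subset\overline{O_1}$, hence $\V\subset O_1$ in $U_x$ up to the $\H^n$-null set $\partial O_1$. The orientation statement with $E=\V$, $F=O_1$ now yields $\nu_\V(x)=\nu_{O_1}(x)$. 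As this holds at every such $x$, the second piece is contained in $\{\nu_\V=\nu_{O_1}\}$, and combining the two pieces gives $\partial^*\V\subsetsim\{\nu_\V=\nu_{O_1}\}\cup\{\nu_\V=-\nu_{O_2}\}$, completing the proof.

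The main obstacle is the orientation step and its careful application: one must argue at the level of reduced boundaries and blow-ups rather than topological boundaries, since (as noted before Lemma \ref{prop:4}) topological inclusions need not descend to reduced boundaries. In particular, the sign in the $O_2$ piece is governed by passing to the complement $\R^n\setminus O_2$, while the matching of normals on the $O_1$ piece hinges on converting the geometric inclusion $U_x\setminus\overline{O_1}\subset G_0$ of Lemma \ref{lm:new} into a genuine local density statement $\V\subset O_1$ that the blow-up at $x$ can detect.
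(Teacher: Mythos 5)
Your argument is correct, but it takes a different technical route from the paper. The paper first proves Lemma \ref{prop:4}, namely $\p^*\V\subsetsim\p^*G_0\cup\p^*O_2$, and then chains normals through the intermediate set $G_0$: by Lemma \ref{lm:2} (disjoint sets of finite perimeter have opposite measure-theoretic normals $\H^{n-1}$-a.e.\ on the common reduced boundary, proved via the set-operation formulas of \cite[Theorem 16.3]{maggi} and Federer's theorem), one gets $\nu_\V=-\nu_{G_0}$ and $\nu_{G_0}=-\nu_{O_1}$ on $\p^*\V\cap\p^*G_0\cap\p^*O_1$, hence $\nu_\V=\nu_{O_1}$ there, while $\nu_\V=-\nu_{O_2}$ on $\p^*\V\cap\p^*O_2$. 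You instead bypass both Lemma \ref{prop:4} and Lemma \ref{lm:2}: you decompose $\p^*\V$ by membership in $\p^*O_2$ and invoke a pointwise orientation lemma (local inclusion up to null sets implies equal normals, via De Giorgi blow-up), applied to $\V\subset\R^n\setminus O_2$ globally and to the local inclusion $\V\cap U_x\subset\overline{O_1}$ extracted from Lemmas \ref{V-boundary} and \ref{lm:new}. The two approaches use the same geometric input (Lemmas \ref{V-boundary} and \ref{lm:new}) but different measure-theoretic machinery: the paper's is pure set algebra on reduced boundaries and only yields the identity up to $\H^{n-1}$-null sets, whereas your blow-up argument identifies the normal at \emph{every} point of $\p^*\V$, at the cost of invoking De Giorgi's structure theorem directly; your route also makes the sign on the $O_2$ piece transparent by passing to the complement rather than through the disjointness formula.
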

  \begin{proof}
    According to Lemma \ref{lm:2} in Appendix \ref{apx}, we have $\p^*\V\cap \p^* G_0\simeq \left\{\nu_\V=-\nu_{G_0} \right\}$ and $\p^*\V\cap \p^* O_2\simeq\left\{\nu_\V=-\nu_{O_2} \right\}$ since $\V\cap G_0=\emptyset$ and $\V\cap O_2=\emptyset$. Moreover, $G_0\cap O_1=\emptyset$ and $G_0\cap O_2=\emptyset$ imply that $\p^*G_0\cap \p^* O_1\simeq\left\{\nu_{G_0}=-\nu_{O_1} \right\}$ and $\p^*G_0\cap \p^* O_2\simeq\left\{\nu_{G_0}=-\nu_{O_2} \right\}$. By Lemma \ref{prop:4}, we have $\p^*\V\subset \p^* O_1\cup \p^* O_2$ and $\p^* \V \subsetsim \p^* G_0\cup \p^* O_2$, then
    \begin{align*}
        \p^* \V &\simeq (\p^*\V \cap \p^*G_0\cap \p^* O_1)\cup (\p^*\V\cap \p^* O_2)\\
        &=\left((\p^* \V\cap \p^* G_0)\cap(\p^* G_0\cap \p^* O_1)\right)\cup (\p^*\V\cap \p^* O_2)\\
        &\simeq\left(\left\{\nu_\V=-\nu_{G_0} \right\}\cap \left\{\nu_{G_0}=-\nu_{O_1} \right\}\right)\cup\left\{\nu_\V=-\nu_{O_2} \right\}\\
        &= \left\{\nu_\V=\nu_{O_1} \right\}\cup\left\{\nu_\V=-\nu_{O_2} \right\}. \qedhere
    \end{align*}
  \end{proof}

\section{Inverse Obstacle Problem for the Laplace equation}
\label{sec:4}

In this section, we prove Theorem \ref{main-Laplace}.
Assume $O_1\neq O_2$, say, $O_1\not\subset O_2$ without loss of generality.
Let $G_0,\mathcal{V}$ be the sets defined by \eqref{def-G0},\eqref{def-Vset} at the beginning of Section \ref{sec:2}.
%Since $u_1,u_2$ and their normal derivatives coincide on $\partial \Omega$, by the unique continuation for elliptic equation (and path-connectedness), $u_1=u_2$ in the interior of $G_0$.

\begin{lemma}
\label{uc-G0}
$u_1=u_2$ in $\overline{G_0}$.
\end{lemma}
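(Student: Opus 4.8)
The plan is to use the unique continuation (Schiffer) approach announced in the introduction, which in this setting becomes essentially immediate once one observes that both solutions are harmonic throughout $G_0$ and share the same Cauchy data on $\Gamma$. First I would record that, by the definition \eqref{def-G0}, one has $G_0\subset \Omega\setminus(\overline{O_1\cup O_2})\subset (\Omega\setminus\overline{O_1})\cap(\Omega\setminus\overline{O_2})$. Consequently both $u_1$ and $u_2$ are defined on $G_0$ and satisfy $\Delta u_i=0$ there, so $w:=u_1-u_2$ is harmonic on the open connected set $G_0$.

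Next I would verify that the Cauchy data of $w$ vanish on $\Gamma$. Since $\Gamma\subset\p\Omega\subset\p G_0$ while $O_1,O_2\subset\subset\Omega$, in a one-sided neighborhood of $\Gamma$ inside $G_0$ the Signorini constraints on $\p O_1,\p O_2$ are inactive: each $u_i$ merely solves the Dirichlet problem $\Delta u_i=0$, $u_i|_{\p\Omega}=f$, with $f$ and $\p\Omega$ smooth. By elliptic boundary regularity each $u_i$ is therefore $C^\infty$ up to $\Gamma$, so $w$ is smooth up to $\Gamma$ with classical Cauchy data. There $w|_\Gamma=f-f=0$, and $\partial_n w|_\Gamma=\partial_n u_1|_\Gamma-\partial_n u_2|_\Gamma=0$ by the measurement hypothesis of Theorem \ref{main-Laplace}.

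Then I would invoke unique continuation. Extending $w$ by zero across $\Gamma$ to the exterior of $\Omega$, the simultaneous vanishing of $w$ and $\partial_n w$ on $\Gamma$ ensures, via the jump formula, that the distributional Laplacian of the extension carries no singular part on $\Gamma$; hence the extension is harmonic in a full two-sided neighborhood of each point of $\Gamma$. The unique continuation property for harmonic functions (equivalently Holmgren's theorem, $\p\Omega$ being non-characteristic for $\Delta$) then forces $w\equiv 0$ in a neighborhood of $\Gamma$ inside $G_0$. Since $G_0$ is connected, strong unique continuation propagates this to $w\equiv 0$ on all of $G_0$, and passing to the closure by continuity gives $u_1=u_2$ in $\overline{G_0}$.

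The step demanding the most care is the passage from vanishing Cauchy data on the single open patch $\Gamma$ to vanishing on the whole of $G_0$: one must apply the continuation on the correct connected component (precisely the reason $G_0$ was singled out as the component with $\p\Omega\subset\p G_0$), and one must be sure the required boundary regularity near $\Gamma$ genuinely holds despite the $C^{1,\alpha}$ ceiling imposed by the Signorini conditions. This last point is unproblematic exactly because $\Gamma$ is separated from $\p O_1\cup\p O_2$, so the solutions are smooth there and the Cauchy data are classical; the remainder of the argument is standard.
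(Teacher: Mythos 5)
Your proof is correct and follows essentially the same route as the paper: unique continuation for harmonic functions from the vanishing Cauchy data on $\Gamma$, propagated through the connected component $G_0$, with the extension to $\overline{G_0}$ by continuity (the paper cites the $C^1(\overline{G_0})$ regularity for this last step). Your additional care about the smoothness of $u_i$ near $\Gamma$ being unaffected by the Signorini regularity ceiling is a useful elaboration but not a departure from the paper's argument.
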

\begin{proof}
Since $u_1,u_2$ and their normal derivatives coincide on an open subset $\Gamma \subset \partial \Omega$, by the unique continuation for elliptic equation (and path-connectedness), $u_1=u_2$ in $G_0$.
They also coincide on $\partial G_0$ due to the regularity $u_1,u_2\in C^1(\overline{G_0})$.
\end{proof}

\begin{proof}[Proof of Theorem \ref{main-Laplace}]
We notice $u_2$ satisfies $\Delta u_2=0$ on $\mathcal{V}$ as $\mathcal{V}\cap \overline{O_2}=\emptyset$. 
On $\partial \mathcal{V} \cap \partial O_2$, the boundary condition applies: $u_2\partial_{\nu_2} u_2=0$, where $\nu_2$ is the unit normal vector of $\partial O_2$.
As $\partial \mathcal{V} \setminus (\partial \mathcal{V}\cap \partial O_2)\subset \partial O_1$ by Lemma \ref{V-boundary}, we know $u_1\partial_{\nu_1} u_1=0$ on $\p\V\setminus \p O_2$, where $\nu_1$ is the normal vector of $\partial O_1$. 
%Hence,
%\begin{equation} \label{condition-V}
%u_2\partial_{\nu} u_2=0 \textrm{ on }\partial \mathcal{V}.
%\end{equation}
%Now we want to use the integration by parts to show that $u_2=const$ in $V$, which would imply that the boundary value $f=const$, contradiction.
%reference Adams book.
Since $u_2\in C^1(\overline{\Omega}\setminus O_2)\cap H^2(\Omega\setminus \overline{O_2})$ is defined on $\Omega \setminus \overline{O_2}$ and extends to a neighborhood of $\Omega \setminus \overline{O_2}$ in the same regularity class (by the standard Sobolev extension), we can apply the Gauss-Green formula in \cite[Proposition 6.4]{monica2019} to $u_2$ in the set of finite perimeter $\mathcal{V}$,
%$u_2$ is defined outside $O_2$ and can be extended to interior of $O_2$ by Sobolev $H^2$-extension, preserving $C^1$. Sobolev extension needs $C^2$ boundary.
\begin{equation} \label{Gauss-Green}
\int_{\mathcal{V}} u_2 \Delta u_2 + \int_{\mathcal{V}} |\nabla u_2|^2= \int_{\partial^* \mathcal{V}} u_2 \nabla u_2 \cdot {\nu} \,d\mathcal{H}^{n-1}.
\end{equation}
Here $\partial^* \mathcal{V}$ is the reduced boundary of $\mathcal{V}$, where the measure-theoretical unit normal $\nu$ to $\mathcal{V}$ is defined. 
%well-defined in the sense of Federer. 
%Note that the reduced boundary used in Chen-Comi-Torres (2019) is the measure-theoretic boundary, which differs from Federer's definition by a measure zero set, see Def 5.5.1 in Ziemer's book.

%This can be argued by
%Theorem 16.3 in Francesco Maggi's book "Sets of Finite Perimeter and Geometric Variational Problems: an introduction to geometric measure theory".
%It states that on the intersection of the reduced boundary of two sets of finite perimeter, the normal coincides up to measure zero set.
%The proof is based on the structure theorem that the reduced boundary $\partial^* \mathcal{V}$ can be decomposed into a countable number of $C^1$ hypersurfaces and $\mathcal{H}^{n-1}$-measure zero set.
%see the definition of reduced boundary here, not the same as measure-theoretical boundary, but they differ by measure zero set.
%The definition of $E^{(0)}, E^{(1)}$ is in Example 5.17, the set of points of density $0,1$, i.e., measure-theoretical exterior and interior.
%By structure theorem (e.g. Def. 5.7.4), when $\mathcal{V}$ is set of finite perimeter, the reduced boundary $\partial^* \mathcal{V}$ (as defined in Def. 5.5.1 in Ziemer's book) can be decomposed into a countable number of $C^1$ hypersurfaces $M_i$ and a $\mathcal{H}^{n-1}$-measure zero set, i.e., $\partial^* \mathcal{V}= \cup_i M_i \cup N$ where $\mathcal{H}^{n-1}(N)=0$, see also Theorem 5.15 in Evans-Gariepy's book.

Using Proposition \ref{prop:5} and Signorini condition at $\partial O_2$, we have 
\begin{equation}
\label{eq:prof_2_1}
u_2 \nabla u_2  \cdot {\nu}=-u_2\partial_{\nu_2} u_2=0 \quad\textrm{ a.e. on }\,\partial^* \mathcal{V} \cap \partial O_2.
\end{equation}
If $\p^* \V\setminus \p O_2=\emptyset$, we have 
\begin{equation}
u_2 \nabla u_2  \cdot {\nu}=-u_2\partial_{\nu_2} u_2=0 \quad\textrm{ a.e. on }\,\partial^* \mathcal{V}.
\end{equation}
For the complement, suppose $\p^* \V\setminus \p O_2\neq\emptyset$,  
%\begin{equation}
%u_2 \nabla u_2 \cdot \nu= 0 \quad \textrm{ a.e. on }\,\partial^*\mathcal{V}\setminus (\partial^* \mathcal{V} \cap \partial O_2).
%\end{equation}
then take any point $z\in \partial^*\mathcal{V}\setminus\partial O_2$. According to Lemma \ref{V-boundary}, we have $z\in \p^*\V\setminus\p O_2\subset \p\V\setminus\p O_2\subset \p G_0\cap (\p O_1\setminus \p O_2)$. Thus, by Lemma \ref{lm:new}, for small enough neighborhood $U_z$ of $z$, there holds $U_z^-:=U_z\setminus \overline{O_1}\subset G_0$.

%we only need $C^1$ regularity.

In $U_z^-\subset G_0\subset \Omega\setminus (\overline{O_1\cup O_2})$, both $u_1$ and $u_2$ are defined and of $C^1$. Since $u_1=u_2$ in $U_z^-$ by Lemma \ref{uc-G0}, we have $u_1=u_2$ and $\partial_{\nu_1} u_1=\partial_{\nu_1}u_2$ at $z$. As the choice of $z$ is arbitrary, we have 
$$u_1=u_2,\text{ and }\partial_{\nu_1} u_1=\partial_{\nu_1}u_2 \quad \textrm{ on } \partial^*\mathcal{V}\setminus (\partial^* \mathcal{V} \cap \partial O_2).$$
Then by Proposition \ref{prop:5} and Signorini boundary condition for $u_1$ on $\partial O_1$, we have
\begin{equation}
\label{eq:prof_2_2}
u_2 \nabla u_2 \cdot \nu=u_2 \p_{\nu_1} u_2 =u_1 \p_{\nu_1} u_1= 0 \quad \textrm{ a.e. on }\,\partial^*\mathcal{V}\setminus (\partial^* \mathcal{V} \cap \partial O_2).
\end{equation}
Combining \eqref{eq:prof_2_1} and \eqref{eq:prof_2_2}, and the case $\p^*\V\setminus\p O_2=\emptyset$, we have shown that 
\begin{equation}
u_2 \nabla u_2 \cdot \nu=0 \quad \textrm{ a.e. on }\,\partial^*\mathcal{V}.
\end{equation}
Thus from \eqref{Gauss-Green} and $\Delta u_2=0$ in $\V$, it follows that $\int_{\mathcal{V}}|\nabla u_2|^2=0$, and consequently $u_2$ is a constant in $\mathcal{V}$. This implies that the boundary value $f$ is a constant on $\partial \Omega$ by Lemma \ref{V-boundary} and unique continuation, which contradicts that $f$ is not constant on $\partial \Omega$ and thus conclude the proof of Theorem \ref{main-Laplace}.
\end{proof}

\section{Inverse Obstacle Problem for Elasticity System}
\label{sec:5}
  In this section, we extend the result we obtained in Section \ref{sec:4} to the L\'ame system. Let $\bu_1$ and $\bu_2$ be the solution of \eqref{eq:direct_problem_ela} with obstacles $O_1$ and $O_2$ respectively. Provided $\bs(\bu_1) \n|_\Gamma=\bs(\bu_2) \n|_\Gamma$ for an open subset $\Gamma$ of $\p\Omega$, we see that $\bu_1$ and $\bu_2$ coincide in $\overline{G_0}$, analogous to Lemma \ref{uc-G0}.
  \begin{lemma}
      $\bu_1=\bu_2$ in $\overline{G_0}$.
  \end{lemma}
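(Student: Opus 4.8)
The plan is to follow the scalar argument of Lemma~\ref{uc-G0} verbatim in structure, replacing unique continuation for the Laplacian by unique continuation for the isotropic Lam\'e system. Set $\boldsymbol{w}:=\bu_1-\bu_2$. By the definition \eqref{def-G0}, $G_0\subset \Omega\setminus(\overline{O_1\cup O_2})$ is a connected open set contained in both $\Omega\setminus\overline{O_1}$ and $\Omega\setminus\overline{O_2}$, where $\bu_1$ and $\bu_2$ respectively solve $\div\bs(\cdot)=0$; hence $\div\bs(\boldsymbol{w})=0$ in $G_0$. Along the open piece $\Gamma\subset\p\Omega\subset\p G_0$ the full Cauchy data of $\boldsymbol{w}$ vanish: the Dirichlet condition in \eqref{eq:direct_problem_ela} gives $\boldsymbol{w}=\boldsymbol{f}-\boldsymbol{f}=0$ on $\p\Omega$, and the hypothesis $\bs(\bu_1)\n|_\Gamma=\bs(\bu_2)\n|_\Gamma$ gives $\bs(\boldsymbol{w})\n=0$ on $\Gamma$.

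First I would convert this boundary vanishing into interior vanishing. Extending $\boldsymbol{w}$ by zero across $\Gamma$ into a neighborhood $\widetilde\Omega$ of $G_0\cup\Gamma$, with $\lambda,\mu$ extended to smooth positive coefficients, the vanishing of both the displacement and the traction makes the zero extension a weak solution of the Lam\'e system in $\widetilde\Omega$ (there is no singular source concentrated on $\Gamma$, since both the jump in the displacement and the jump in the conormal derivative vanish). I would then invoke the unique continuation property for the isotropic elasticity system with smooth coefficients to conclude $\boldsymbol{w}\equiv0$ in a neighborhood of $\Gamma$, and propagate this through the connected set $G_0$ by a chain of overlapping balls, obtaining $\bu_1=\bu_2$ in $G_0$. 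Finally, since $\bu_1,\bu_2$ are smooth in the interior and $C^1$ up to the obstacle boundaries (by the $C^{1,\alpha}$-regularity of Schumann and Andersson), the identity extends by continuity to $\overline{G_0}$.

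I expect the unique continuation step to be the main obstacle. Unlike the Laplace case, the operator $\bu\mapsto\div\bs(\bu)=\mu\Delta\bu+(\lambda+\mu)\nabla(\div\bu)+(\text{lower order})$ is a genuinely coupled second-order elliptic system: its principal symbol $\mu|\xi|^2 I_n+(\lambda+\mu)\,\xi\otimes\xi$ is positive definite because $\mu>0$ and $\lambda+2\mu>0$, but scalar unique continuation does not apply directly. The standard route is to decouple the system by taking $\div$ and $\mathrm{curl}$ of the equation, reducing to scalar elliptic equations for $\div\bu$ and the rotation, and then applying the corresponding Carleman estimates; justifying propagation of zeros across $G_0$ in this coupled setting is the technical heart of the argument, whereas the reduction to matching Cauchy data on $\Gamma$ is immediate.
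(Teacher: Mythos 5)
Your argument is correct and follows essentially the same route as the paper: vanishing Cauchy data for $\bu_1-\bu_2$ on $\Gamma$, unique continuation for the isotropic Lam\'e system propagated through the connected set $G_0$, and the $C^1(\overline{G_0})$-regularity of the solutions to pass from $G_0$ to its closure. The only difference is that the paper outsources the step you identify as the technical heart, citing an off-the-shelf unique continuation result for local Cauchy data of the Lam\'e system (Corollary 2.2 of \cite{eberle2021}) instead of rederiving it via zero extension across $\Gamma$ and interior unique continuation.
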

  \label{lm:uc_lame}
  \begin{proof}
      Since $\bu_1|_\Gamma=\bu_2|_\Gamma$ and $\bs(\bu_1) \n |_\Gamma=\bs(\bu_2) \n |_\Gamma$, by the unique continuation for local Cauchy data \cite[Corollary 2.2]{eberle2021} and the path-connectedness of $G_0$, there holds $\bu_1=\bu_2$ in $G_0$. Moreover, $\bu_1$ and $\bu_2$ also coincide on $\p G_0$ due to the regularity $\bu_1,\bu_2\in C^1(\overline{G_0})$.
  \end{proof}\par
  Our next objective is to prepare Green's formula for the stress tensor on $\V$. To this end, we generalize Green's formula \cite[eq. (4.22)]{sofonea} from Lipschitz domains to sets of finite perimeter based on \cite{monica2019}.
  \begin{lemma}
    \label{lm:gauss}
    Let $E$ be an open set in $\R^n$ and $\bu\in \left(C^1(E)\right)^n\cap \left(H^2(E)\right)^n$. Let $U\subset\subset E$ be an open set of finite perimeter. Then
    %For $\bs(\bu)= 2\mu \be(\bu)+\lambda(\tr\be(\bu))I_n$, there holds
    \begin{align}
      \label{eq:Gauss}
      \int_U \bs(\bu): \be(\bu)\ dx+\int_U \bu \cdot \div \bs(\bu)\ dx=\int_{\p^* U}\bs(\bu)\bn\cdot \bu \ d\mathcal{H}^{n-1},
    \end{align}
    where $\bn$ is the measure-theoretic unit normal on the reduced boundary $\p^* U$ of $U$.
  \end{lemma}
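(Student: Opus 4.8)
The plan is to establish the Gauss–Green / Green's formula \eqref{eq:Gauss} for the stress tensor by reducing it to the divergence theorem on sets of finite perimeter, exactly as was done for the scalar Laplacian in \eqref{Gauss-Green}. The identity is a classical integration-by-parts statement: for smooth domains it follows from the symmetry of $\bs$ together with the computation $\div(\bs(\bu)\bu)=\div\bs(\bu)\cdot\bu+\bs(\bu):\nabla\bu$, and since $\bs$ is symmetric we have $\bs(\bu):\nabla\bu=\bs(\bu):\be(\bu)$. The whole content is therefore to justify the boundary term on the reduced boundary $\p^*U$ of a set of finite perimeter rather than on a smooth or Lipschitz boundary, which is precisely what the Gauss–Green theorem of \cite{monica2019} provides under the stated regularity $\bu\in(C^1(E))^n\cap(H^2(E))^n$.

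First I would fix the vector field to which the divergence theorem is applied. For each fixed index $i$ set $\boldsymbol{F}_i:=\sum_{j} \sigma_{ij}(\bu)\,u_i$ — more cleanly, consider the single vector field $\boldsymbol{F}:=\bs(\bu)\bu$ whose $j$-th component is $(\bs(\bu)\bu)_j=\sum_i \sigma_{ji}(\bu)u_i$. Because $\mu,\lambda\in C^\infty(\overline\Omega)$ and $\bu\in(C^1(E))^n\cap(H^2(E))^n$, the entries $\sigma_{ij}(\bu)$ lie in $C^0(E)\cap H^1(E)$ and $u_i\in C^1(E)\cap H^2(E)$, so each component of $\boldsymbol{F}$ is in $C^0(E)\cap H^1(E)$, and in particular $\boldsymbol{F}$ extends (via the Sobolev extension, as was invoked before \eqref{Gauss-Green}) to a field whose components lie in $C^0\cap H^1$ on a neighborhood of $\overline U$. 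This is the regularity required to invoke the Gauss–Green formula of \cite[Proposition 6.4]{monica2019} on the set of finite perimeter $U$, yielding
\begin{equation*}
\int_U \div\boldsymbol{F}\,dx=\int_{\p^*U}\boldsymbol{F}\cdot\bn\,d\mathcal H^{n-1},
\end{equation*}
with $\bn$ the measure-theoretic unit normal on $\p^*U$.

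Next I would expand the divergence pointwise on $U$. Using $\div\boldsymbol{F}=\sum_{i,j}\p_j(\sigma_{ji}(\bu)u_i)=\sum_{i,j}\big(\p_j\sigma_{ji}(\bu)\big)u_i+\sum_{i,j}\sigma_{ji}(\bu)\,\p_j u_i$, the first sum is $(\div\bs(\bu))\cdot\bu$ and the second is $\bs(\bu):\nabla\bu$. The symmetry $\sigma_{ji}=\sigma_{ij}$ then gives $\bs(\bu):\nabla\bu=\bs(\bu):\tfrac12(\nabla\bu+(\nabla\bu)^T)=\bs(\bu):\be(\bu)$. On the boundary side, $\boldsymbol{F}\cdot\bn=\sum_{i,j}\sigma_{ji}(\bu)u_i n_j=\bs(\bu)\bn\cdot\bu$, again by symmetry. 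Substituting these into the divergence-theorem identity yields exactly \eqref{eq:Gauss}.

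The only genuine obstacle is checking that the hypotheses of the Gauss–Green theorem in \cite{monica2019} are met, namely that $\boldsymbol{F}$ has the integrability and the continuity-up-to-a-neighborhood needed for the boundary trace on $\p^*U$ to be the pointwise restriction; **the hard part** is verifying that multiplying the $H^1\cap C^0$ stress components by the $C^1$ displacement produces a field still in the admissible class and that the product rule holds in the distributional sense so that the pointwise expansion of $\div\boldsymbol{F}$ is legitimate. Since $U\subset\subset E$ and all factors are continuous with $H^1$ regularity on $E$, these are routine: the product of an $H^1\cap C^0$ function and a $C^1$ function is $H^1\cap C^0$, and the classical product rule holds. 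Everything else is the symmetry of $\bs$ and an application of the already-cited divergence theorem, so the proof is short once the vector field $\boldsymbol{F}=\bs(\bu)\bu$ is identified.
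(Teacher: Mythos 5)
Your proposal is correct and follows essentially the same route as the paper: both reduce the identity to the Gauss--Green theorem for continuous divergence-measure fields on sets of finite perimeter from \cite{monica2019}, combined with the symmetry of $\bs(\bu)$ to convert $\bs(\bu):\nabla\bu$ into $\bs(\bu):\be(\bu)$. The only cosmetic difference is that the paper invokes the product-rule form of that theorem row by row for the pairs $(u_i,\bs_i)$, with $\bs_i$ the $i$-th row of the stress viewed as a divergence-measure field, whereas you assemble the single field $\bs(\bu)\bu$ and check the product rule and the admissibility of the product by hand.
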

  
  \begin{proof}
    Since $\be(\bu)=\frac{1}{2}\left(\nabla \bu+(\nabla \bu)^T\right)$, then to prove \eqref{eq:Gauss}, it is sufficient to show that
    \begin{equation}
      \begin{cases}
        \label{case}
        \int_U \bs(\bu): \nabla \bu\ dx+\int_U \bu\cdot \div \bs(\bu)\ dx=\int_{\p^* U}\bs(\bu)\bn\cdot \bu \ d\mathcal{H}^{n-1} \\
        \\
        \int_U \bs(\bu): (\nabla \bu)^T\ dx+\int_U \bu\cdot \div \bs(\bu)\ dx=\int_{\p^* U}\bs(\bu)\bn\cdot \bu \ d\mathcal{H}^{n-1}
      \end{cases}
    \end{equation}
    As $\bs(\bu)$ is symmetric, the above two equations are exactly the same. Write $\bs_i=(\sigma_{i1},\sigma_{i2},\cdots,\sigma_{in})$ the $i$-th row of the matrix $\bs(\bu)$, then proving \eqref{case} amounts to showing that 
    \begin{equation}
      \label{eq:each_gauss}
      \int_U \bs_i\cdot \nabla u_i\ dx+\int_U u_i\ \div \bs_i \ dx=\int_U \div (u_i\bs_i)\ dx=\int_{\p^* U}(\bs_i\cdot\bn) u_i \ d\mathcal{H}^{n-1},\ 1\leq i\leq n.
    \end{equation} 
    Recall that
    \begin{align}
    \label{eq:express_bs}
      \sigma_{ij}=\mu\left(\frac{\p u_i}{\p x_j}+\frac{\p u_j}{\p x_i}\right)+\delta_i^j \lambda \left(\sum_{k=1}^{n}\frac{\p u_k}{\p x_k}\right),
    \end{align}
    where $\mu,\lambda\in C^\infty(E)$. Since $\bu\in (H^2(E))^n$, \eqref{eq:express_bs} implies that $\div\bs_i\in L^2(E)$, in particular, one can view $\abs{\div \bs_i}$ as a real Radon measure on $\Omega$ and thus $\bs_i$ is a divergence measure field \cite[Definition 2.3]{monica2019}. Notice that $\bs_i\in C^0(E;\R^n)$ and $u_i\in C^1(E)$, then by \cite[Proposition 6.3]{monica2019}, there holds \eqref{eq:each_gauss} and thus we obtain \eqref{eq:Gauss}.
  \end{proof}
  \begin{proof}[Proof of Theorem \ref{main:elasticity}]
    Without loss of generality, we assume $O_1\not\subset O_2$. We denote the measure-theoretical unit normal of $\V$, $O_1$ and $O_2$ by $\bn$, $\bn_1$, and $\bn_2$, respectively.\par
    Using the Signorini boundary condition on $\p O_2$ and Proposition \ref{prop:5}, we have
    \begin{equation}
    \label{eq:thm_ela_1}
        \bs(\bu_2)_\tau=0,\ (\bu_2)_{\nu}\bs(\bu_2)_{\nu}=(\bu_2)_{\nu_2}\bs(\bu_2)_{\nu_2}=0 \quad\text{a.e. on }\ \p^*\V\cap \p O_2.
    \end{equation}
    
    Analogous to the argument used in the proof for Theorem \ref{main-Laplace} in Section \ref{sec:4}, we can obtain
    \begin{equation}
        \bs(\bu_2)_\tau=\bs(\bu_1)_\tau=0,\ \bs(\bu_1)_{\nu_1}=\bs(\bu_2)_{\nu_1}\quad\text{on  }\p^*\V\setminus \p O_2.
    \end{equation}
    By Lemma \ref{V-boundary}, $\p^* \V\setminus \p O_2\subset \p G_0$, and then Lemma \ref{lm:uc_lame} reads that $\bu_1=\bu_2$ on $\p^* \V\setminus \p O_2$. According to Proposition \ref{prop:5} and the Signorini boundary condition holds for $\bu_1$ on $\p O_1$, there holds
    \begin{align}
        \label{eq:thm_ela_2}
        \bs(\bu_2)_{\tau}=\bs(\bu_2)_{\tau_1}=&\bs(\bu_1)_{\tau_1}=0,\\ 
        (\bu_2)_\nu\bs(\bu_2)_\nu=(\bu_2)_{\nu_1}\bs(\bu_2)_{\nu_1}=&(\bu_1)_{\nu_1}\bs(\bu_1)_{\nu_1}=0\quad \text{a.e. on }\p^*\V\setminus\p O_2.
    \end{align}
    Combining \eqref{eq:thm_ela_1} and \eqref{eq:thm_ela_2}, we have
    \begin{equation}
         \bs(\bu_2)_{\tau}=0,\ \bs(\bu_2)\bn\cdot\bu_2=(\bu_2)_\nu\bs(\bu_2)_\nu=0\quad\text{on }\p^*\V.
    \end{equation}
    Thanks to the regularity results \cite[Theorem 2.2]{kinderlehrerF} and \cite[Theorem 3.10]{schumann1989}, there holds $\bu_2\in (C^1(\overline{\Omega}\setminus O_2))^n\cap (H^2(\Omega\setminus\overline{O_2}))^n$. By the standard Sobolev extension, we can extend $\bu_2$ to a neighborhood $U$ of $\Omega\setminus O_2$ such that $\V\subset\subset U$ and $\bu_2\in (C^1(U))^n\cap (H^2(U))^n$. Then applying Lemma \ref{lm:gauss}, there holds
     \begin{align}
      \int_\V \bs(\bu_2): \be(\bu_2)\ dx&=\int_{\p^* \V}\bs(\bu_2)\bn\cdot \bu_2\ d\mathcal{H}^{n-1}\\
      &=\int_{\p^*\V} (\bs(\bu_2)_\tau+\bs(\bu_2)_\nu \bn)\cdot \bu_2\ d\mathcal{H}^{n-1}\\
      &=\int_{\p^* \V}\bs(\bu_2)_\nu(\bu_2)_\nu\ d\mathcal{H}^{n-1}=0.
    \end{align}
    Since 
    \begin{equation}
      \bs(\bu_2): \be(\bu_2)=2\mu \be(\bu_2): \be(\bu_2)+\lambda (\tr \be(\bu_2))^2,
    \end{equation}
    where $\mu,\lambda>0$ in $\overline{\Omega}$. We can conclude that $\be(\bu_2)= 0$ in $\V$. \par
    According to Lemma \ref{lm:skew} in Appendix \ref{apx}, we have $\bu_2=\boldsymbol{c}+A\x$ in $\V$, where $\boldsymbol{c}\in \R^n$ and $A\in \R^{n\times n}$ is constant a skew-symmetric matrix. Then due to the unique continuation for the static elasticity system (see e.g. \cite{ang1998} or \cite[Theorem 2.3]{uhlmann2009}), $\bu_2= \boldsymbol{c}+A\x$ in $\Omega\setminus \overline{O_2}$ and $\boldsymbol{f}=(\boldsymbol{c}+A\x)|_{\p\Omega}$, which contradicts with the assumption $\boldsymbol{f}\not\in \mathcal{R}$.
  \end{proof}

  \section{Counterexamples to unique solvability}
\label{sec-counterexamples}

In this section, we discuss counterexamples to the unique solvability of the inverse Signorini obstacle problem.
For the scalar problem, it is not possible to solve the problem if the Dirichlet boundary value $f$ is a nonnegative constant.
Indeed, in this case the constant function $u=c\geq 0$ is the solution to the Signorini problem by the uniqueness of solution, since $u=c\geq 0$ is allowed by the Signorini condition $u|_{\p O} \geq 0$.
This means that given any nonnegative constant boundary value,
the normal derivative of the solution for any Signorini obstacle is identically zero.

As a special case in the setting of Theorem \ref{main-Laplace}, we show that the inverse obstacle problem is uniquely solvable if $f$ is a negative constant.

      \begin{proposition}
      \label{prop:nec_lap}
        Let $\Omega\subset \mathbb{R}^n$ be a bounded open set with smooth boundary and $\Gamma\subset \partial \Omega$ be a nonempty open subset. Let $O_1,O_2\subset\subset \Omega$ be open subsets with smooth boundary.
        Assume that $\Omega\setminus \overline{O_1},\Omega\setminus \overline{O_2}$ are connected. 
        Suppose $u_1,u_2$ solves \eqref{eq:direct_problem_lap} for the given boundary value $f$ with obstacle $O_1,O_2$, respectively. 
        If $f$ is a negative constant and $\partial_{n} u_1|_{\Gamma}=\partial_{n} u_2|_{\Gamma}$ with respect to the unit normal $\boldsymbol{n}$ to $\partial \Omega$, then $O_1=O_2$.
      \end{proposition}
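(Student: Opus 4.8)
The plan is to follow the proof of Theorem \ref{main-Laplace} essentially verbatim, deviating only at the very last step. Recall that there the assumption $O_1\neq O_2$ (say $O_1\not\subset O_2$ after relabeling) led, via the sets $G_0,\mathcal{V}$ from Section \ref{sec:2} together with the Gauss--Green identity \eqref{Gauss-Green} and the Signorini conditions, to the conclusion that $u_2$ is constant on $\mathcal{V}$. Crucially, nothing in that chain of reasoning used that $f$ is non-constant: it used only that $u_1,u_2$ are genuine Signorini solutions, that $u_1=u_2$ in $\overline{G_0}$ (Lemma \ref{uc-G0}), and Proposition \ref{prop:5}. So I would reuse all of it unchanged to reach ``$u_2$ is constant on $\mathcal{V}$'', and then supply a different contradiction tailored to the sign of $f$.

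First I would fix $f\equiv c_0$ with $c_0<0$ and argue by contradiction, assuming $O_1\neq O_2$ and, without loss of generality, $O_1\not\subset O_2$. I would build $G_0$ and $\mathcal{V}$ as in \eqref{def-G0}--\eqref{def-Vset}, invoke unique continuation for the Laplace equation to get $u_1=u_2$ in $\overline{G_0}$ exactly as in Lemma \ref{uc-G0}, and then run the energy argument verbatim: the boundary integrand $u_2\nabla u_2\cdot\nu$ vanishes $\mathcal{H}^{n-1}$-almost everywhere on $\partial^*\mathcal{V}$ (by \eqref{eq:prof_2_1}--\eqref{eq:prof_2_2}), so $\int_{\mathcal{V}}|\nabla u_2|^2=0$ and $u_2$ is constant on $\mathcal{V}$.

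The new final step replaces the ``$f$ non-constant'' contradiction. Since $u_2$ is harmonic on the connected set $\Omega\setminus\overline{O_2}$ and constant on the open subset $\mathcal{V}$, unique continuation forces $u_2$ to equal that same constant $c$ throughout $\Omega\setminus\overline{O_2}$; restricting the trace to $\partial\Omega$ gives $c=f=c_0<0$. By Lemma \ref{V-boundary} the set $\partial\mathcal{V}\setminus\partial O_2$ is nonempty and contained in $\partial G_0\cap\partial O_1$. Picking any point $z$ there, Lemma \ref{uc-G0} gives $u_1(z)=u_2(z)$ (as $z\in\partial G_0\subset\overline{G_0}$), while continuity of $u_2$ up to $\partial O_1$ gives $u_2(z)=c_0$. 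Hence $u_1(z)=c_0<0$ at a point $z\in\partial O_1$, which contradicts the Signorini constraint $u_1\geq 0$ on $\partial O_1$. This forces $O_1=O_2$.

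I expect the only genuinely new point---and the main obstacle---to be isolating how the \emph{negativity} of $f$ enters. In Theorem \ref{main-Laplace} the contradiction was the non-constancy of $f$, which is unavailable here; instead one must convert the constant value $c_0<0$ into a violation of the one-sided constraint $u\geq 0$ on an obstacle boundary. The clean way is to transfer the negative value onto $\partial O_1$ through the identity $u_1=u_2$ on $\partial\mathcal{V}\setminus\partial O_2$, which has the further benefit of covering uniformly the degenerate case $O_2=\emptyset$ (where there is no constraint on $\partial O_2$ to contradict, but there is still one on the nonempty $\partial O_1$). By contrast, the companion sign---a \emph{nonnegative} constant $f$---genuinely fails to determine the obstacle, since the constant solution is then admissible; this is precisely the obstruction already recorded at the start of Section \ref{sec-counterexamples}.
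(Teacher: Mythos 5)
Your proof is correct and follows essentially the same route as the paper: both reduce, via the unchanged energy argument, to the conclusion that $u_2$ equals the negative constant $c_0$ on $\Omega\setminus\overline{O_2}$ and then contradict a Signorini sign constraint. The only difference is that you derive the contradiction on $\partial O_1$ (transferring the value $c_0<0$ through $u_1=u_2$ on $\overline{G_0}$ at a point of $\partial\mathcal{V}\setminus\partial O_2$), which handles $O_2=\emptyset$ uniformly, whereas the paper first contradicts $u_2|_{\partial O_2}\geq 0$ and treats $O_2=\emptyset$ as a separate case.
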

      
      \begin{proof}
          Assume $O_1\neq O_2$, say $O_1\not\subset O_2$ without loss of generality.
          Following the proof of Theorem \ref{main-Laplace}, we have $u_2=c$ for some constant $c$ in $\overline{\Omega}\setminus O_2$. 
          Under the assumption that $f$ is a negative constant, this implies that $u_2=c<0$.
          However, this contradicts the Signorini condition $u_2|_{\p O_2}\geq 0$, as long as $O_2\neq \emptyset$.
          In the case of $O_2=\emptyset$, Lemma \ref{uc-G0} shows $u_1=u_2=c<0$ on $\overline{\Omega} \setminus O_1$, which again contradicts with the Signorini condition $u_1|_{\p O_1}\geq 0$ as long as $O_1\neq \emptyset$. Hence both $O_1,O_2$ have to be empty set.
          \end{proof}

For the elasticity case, the solvability issue is more complicated because the shape of the obstacle matters to the Signorini condition $\bu \cdot \nu\leq 0$.
              Consider the sets 
      \begin{equation}
          \Xi := \big\{O\subset \Omega \mid O\text{ is a nonempty open subset with smooth boundary} \big\},
      \end{equation}
      and we define
      \begin{equation}          \Upsilon_{A,\boldsymbol{c}}:=\big\{O\subset \Xi\mid (\boldsymbol{c}+A\x)\cdot\nu_O(\x)=0\text{ for all }\x\in \p O \big\} \cup \{\emptyset\},
      \end{equation}
      where $\nu_O$ denotes the inward normal of $O$ at $\p O$.
Complementing Theorem \ref{main:elasticity}, in the following lemma we characterize the unique solvability in the case of $\boldsymbol{f}\in \mathcal{R}$.
      \begin{proposition}
      \label{prop:nec_ela}
    Let $\Omega\subset \mathbb{R}^n$ be a bounded open set with smooth boundary and $\Gamma\subset \partial \Omega$ be a nonempty open subset. Let $O_1,O_2\subset\subset \Omega$ be open subsets with smooth boundary.
    Assume that $\Omega\setminus \overline{O_1},\Omega\setminus \overline{O_2}$ are connected.
    Suppose $\bu_1, \bu_2$ solves the elasticity system \eqref{eq:direct_problem_ela} for $\boldsymbol{f}=\boldsymbol{c}+A\x \in\mathcal{R}$ with $O_1, O_2$, respectively, for some vector $\boldsymbol{c}\in \mathbb{R}^n$ and skew-symmetric matrix $A$.
If  $\bs(\bu_1)\n|_{\Gamma}=\bs(\bu_2)\n |_{\Gamma}$, then $O_1=O_2$ unless $O_1,O_2\in  \Upsilon_{A,\boldsymbol{c}}$.
      \end{proposition}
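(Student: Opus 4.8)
The plan is to prove the contrapositive: assuming $O_1\neq O_2$, I will show that necessarily both $O_1,O_2\in\Upsilon_{A,\boldsymbol{c}}$, which is precisely the content of the assertion ``$O_1=O_2$ unless $O_1,O_2\in\Upsilon_{A,\boldsymbol{c}}$''. As in the proof of Theorem \ref{main:elasticity}, I may assume $O_1\not\subset O_2$ after relabelling, and form the sets $G_0,\V$ of Section \ref{sec:2}. Running the identical chain of steps as in that proof --- Green's formula on the set of finite perimeter $\V$ (Lemma \ref{lm:gauss}), Proposition \ref{prop:5} to transfer the Signorini conditions from $\p O_1,\p O_2$ onto $\p^*\V$, and the unique-continuation Lemma \ref{lm:uc_lame} --- yields $\be(\bu_2)=0$ on $\V$, hence by Lemma \ref{lm:skew} and unique continuation for the static elasticity system that $\bu_2=\boldsymbol{c}'+A'\x$ in $\Omega\b\o2$ for some $\boldsymbol{c}'\in\R^n$ and some skew-symmetric $A'$. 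The only --- but crucial --- difference from Theorem \ref{main:elasticity} is that here $\boldsymbol{f}\in\mathcal{R}$ is permitted, so this does not produce a contradiction; instead I will extract from it the compatibility condition defining $\Upsilon_{A,\boldsymbol{c}}$.

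Next I would pin down the rigid motion. Since $\bu_2=\boldsymbol{c}'+A'\x$ on $\Omega\b\o2$ and $\bu_2|_{\p\Omega}=\boldsymbol{f}=\boldsymbol{c}+A\x$, the affine field $(\boldsymbol{c}'-\boldsymbol{c})+(A'-A)\x$ vanishes on $\p\Omega$. As $\p\Omega$ is the boundary of a nonempty bounded open set, it cannot lie in any hyperplane and therefore affinely spans $\R^n$, which forces $\boldsymbol{c}'=\boldsymbol{c}$ and $A'=A$. Thus $\bu_2=\boldsymbol{c}+A\x$ throughout $\Omega\b\o2$, with exactly the data appearing in $\boldsymbol{f}$.

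The heart of the argument, and the step I expect to be the main obstacle, is upgrading the Signorini \emph{inequality} to the \emph{equality} defining $\Upsilon_{A,\boldsymbol{c}}$. On $\p O_2$ the Signorini condition $\bu_\nu\leq 0$ gives $(\boldsymbol{c}+A\x)\cdot\nu_{O_2}\leq 0$, where $\nu_{O_2}$ is the inward normal of $O_2$ (which coincides with the outward normal $\bn$ of $\Omega\b\o2$ at $\p O_2$). The key observation is that a rigid motion is divergence free: since $A^T=-A$ we have $\div(\boldsymbol{c}+A\x)=\tr A=0$, so the divergence theorem on the smooth bounded domain $O_2$ gives $\int_{\p O_2}(\boldsymbol{c}+A\x)\cdot\nu_{O_2}\,d\mathcal{H}^{n-1}=-\int_{O_2}\tr A\,dx=0$. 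A nonpositive continuous integrand with vanishing integral must vanish identically, hence $(\boldsymbol{c}+A\x)\cdot\nu_{O_2}=0$ on all of $\p O_2$, i.e. $O_2\in\Upsilon_{A,\boldsymbol{c}}$. Note that no boundary regularity of $\bu_2$ at $\p O_2$ is needed, since on $\Omega\b\o2$ the solution already coincides with the smooth affine field $\boldsymbol{c}+A\x$.

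Finally I would transfer the same conclusion to $O_1$. By Lemma \ref{lm:uc_lame} we have $\bu_1=\bu_2=\boldsymbol{c}+A\x$ on the open set $G_0$, and since $\bu_1$ solves the elasticity system on the connected set $\Omega\b\overline{O_1}$, unique continuation gives $\bu_1=\boldsymbol{c}+A\x$ throughout $\Omega\b\overline{O_1}$. Repeating verbatim the divergence-theorem argument on $O_1$ yields $(\boldsymbol{c}+A\x)\cdot\nu_{O_1}=0$ on $\p O_1$, so $O_1\in\Upsilon_{A,\boldsymbol{c}}$ as well. When one of the obstacles is empty the corresponding membership $\emptyset\in\Upsilon_{A,\boldsymbol{c}}$ holds by definition, and the argument for the nonempty one is unchanged. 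Having shown that $O_1\neq O_2$ forces $O_1,O_2\in\Upsilon_{A,\boldsymbol{c}}$, the proposition follows.
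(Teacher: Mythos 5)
Your proposal is correct and follows essentially the same route as the paper: reduce to $\be(\bu_2)=0$ on $\V$ via the proof of Theorem \ref{main:elasticity}, identify the resulting rigid motion with $\boldsymbol{f}=\boldsymbol{c}+A\x$, and upgrade the Signorini inequality $(\boldsymbol{c}+A\x)\cdot\nu_{O_2}\leq 0$ to an equality by the divergence theorem applied to the divergence-free field $\boldsymbol{c}+A\x$ on $O_2$, then transfer to $O_1$ by unique continuation. The only cosmetic differences are that you pin down the rigid motion by the affine-spanning property of $\p\Omega$ where the paper invokes continuity of $\bu_2$ up to $\p\Omega$, and you propagate to $O_1$ through $G_0$ via Lemma \ref{lm:uc_lame} where the paper observes $\bs(\bu_2)\equiv 0$ and hence $\bs(\bu_1)\n|_\Gamma=0$.
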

      \begin{proof}
          It suffices to show that $O_1\neq O_2$ implies $O_1,O_2\in  \Upsilon_{A,\boldsymbol{c}}$. Assume $O_1\not\subset O_2$ without loss of generality. Following the proof of Theorem \ref{main:elasticity} and the continuity of solutions, we have $\bu_2=\boldsymbol{c}+A\x$ in $\overline{\Omega}\setminus O_2$.

Consider the case when $O_2\neq \emptyset$. The function $\bu_2$ clearly extends smoothly to $\widetilde{\bu}_2:=\boldsymbol{c}+A\x$ in the whole domain $\overline{\Omega}$. Then we apply the divergence theorem to $\widetilde{\bu}_2$ in $O_2$,          
          \begin{equation}
              \int_{\p O_2}(\boldsymbol{c}+A\x)\cdot \nu_{O_2}(\x) =-\int_{O_2}\div (\boldsymbol{c}+A\x)=0.
          \end{equation}
          %Here uses the inward direction of unit normal of $O$, to match the unit normal outward direction of $\Omega\setminus O$.
          Since the Signorini condition for $\bu_2$ on $\p O_2$ requires 
          \begin{equation}
              (\bu_2)_\nu=(\boldsymbol{c}+A\x)\cdot \nu_{O_2}(\x)\leq 0, \quad\textrm{for }\x\in \p O_2,
          \end{equation}
          we have $(\boldsymbol{c}+A\x)\cdot\nu_{O_2}(\x)=0$ on $\p O_2$ and thus $O_2\in \Upsilon_{A,\boldsymbol{c}}$. 
          As $\bu_2=\boldsymbol{c}+A\x$ in $\overline{\Omega}\setminus O_2$, it follows that $\bs(\bu_2)=0$ identically and thus $\bs(\bu_1)\n|_{\Gamma}=\bs(\bu_2)\n |_{\Gamma}=0$. Then the unique continuation property yields that $\bu_1=\boldsymbol{c}+A\x$ in $\overline{\Omega}\setminus O_1$. Then the same argument as above gives $O_1\in \Upsilon_{A,\boldsymbol{c}}$ (since $O_1\neq \emptyset$ as $O_1\not \subset O_2$).
          In the case of $O_2=\emptyset$, one repeats the argument for $\bu_1$ to show $O_1\in \Upsilon_{A,\boldsymbol{c}}$.
      \end{proof}

A corollary of Proposition \ref{prop:nec_ela} is that if $\Upsilon_{A,\boldsymbol{c}}= \{\emptyset\}$, then the inverse obstacle problem is uniquely solvable with boundary data $\boldsymbol{f}=\boldsymbol{c}+A\x$.
The vector $\boldsymbol{c}$ and matrix $A$ determine if $\Upsilon_{A,\boldsymbol{c}}$ is empty or not.
Next, we give examples for both cases $\Upsilon_{A,\boldsymbol{c}}\neq \{\emptyset\}$ and $\Upsilon_{A,\boldsymbol{c}}= \{\emptyset\}$.
      \begin{example}
          For any given constant skew-symmetric matrix $A$ and a point $p\in \Omega$, we set $\boldsymbol{c}=-Ap$. Then there exists a ball $B$ centered at $p$ with radius $\delta>0$ such that $B\subset \Omega$. Notice that for any $q\in \p B(p,\delta)$, the normal derivative $\nu_B$ at $q$ is $(q-p)/\delta$. Since $A$ is skew-symmetric, we have
          \begin{equation}
              (\boldsymbol{c}+Aq)\cdot \nu_B(q)=\frac{1}{\delta}A(q-p)\cdot (q-p)=0.
          \end{equation}
          Therefore, $B(p,\delta)\in \Upsilon_{A,\boldsymbol{c}}$.
      \end{example}
      \begin{example}
          For any given constant skew-symmetric matrix $A$, the range of $A$ restricted in $\Omega$ is bounded, i.e. for a large enough $M$, we have
          \begin{equation}
              \max_{\x\in \Omega}\, \abs{A\x}<M.
          \end{equation}
          Then we choose a vector $\boldsymbol{c}$ with $\abs{\boldsymbol{c}}>M$. Assume that there exists a nonempty set $O\in \Upsilon_{A,\boldsymbol{c}}$.
         To get a contradiction, we choose a point $z\in \p O$ such that $\nu_O(z)=\boldsymbol{c}/\abs{\boldsymbol{c}}$. 
         Such a point $z$ can be chosen in the following way. Let $P$ be a hyperplane perpendicular to $\boldsymbol{c}$. Initially place the hyperplane $P$ disjoint from the set $O$, and move the hyperplane towards $P$ until they first intersect. At any intersection points, it must satisfy that $P$ is tangential to $\partial O$ so that the normal directions of $P$ and $\partial O$ coincide.
         %here uses the smoothness of the boundary of $O$.
         Then at point $z$, we have
          \begin{equation}
            \abs{(\boldsymbol{c}+A z)\cdot \nu_O(z)}\geq\abs{\boldsymbol{c}}-\abs{Az\cdot\frac{\boldsymbol{c}}{\abs{\boldsymbol{c}}}}> \abs{\boldsymbol{c}}-M>0.
          \end{equation}
          However, this contradicts with the assumption that $(\boldsymbol{c}+A \x)\cdot \nu_O(\x)=0$ for all $\x\in \p O$. Thus $\Upsilon_{A,\boldsymbol{c}}= \{\emptyset\}$.        
      \end{example}

  \appendix
  \section{Auxiliary lemmas}
  \label{apx}
 \begin{lemma}
    \label{lm:1}
    Let $A, B\subset \R^n$ be two nonempty connected open sets satisfying $A\subset B$. If $\p A\subset \p B$, then $A=B$.
  \end{lemma}
  \begin{proof}
    Assume $A\neq B$, then there exists a point $p\in B$ but $p\not\in A$. If $p\in \overline{A}$, then $p\in (\overline{A}\b A)=\p A\subset \p B$ and $p\not\in B$, a contradiction. On the contrary, if $p\not\in \overline{A}$, then $p\in B\b \overline{A}\neq \emptyset$, and we have
    \begin{align*}
      (B\b \overline{A})\cup A=B\b \p A\supset B\b \p B=B.
    \end{align*}
    Notice that $(B\b \overline{A})\cap A=\emptyset$. Thus, the set $B$ can be written as the union of two disjoint open sets $B\b \overline{A}$ and $A$. 
    As $A\neq \emptyset$, it must satisfy that $A=B$ due to the connectedness of $B$.
  \end{proof}

    \begin{lemma}
  \label{lm:2}
      If $E$ and $F$ are sets of finite perimeter, then there holds
      \begin{equation*}
          \p^* E\cap \p^* F\simeq \left\{\nu_E=\nu_F \right\}\cup \left\{\nu_E=-\nu_F \right\}.
      \end{equation*}
      Furthermore, if $E\cap F=\emptyset$, then
      \begin{equation*}
          \p^* E\cap \p^* F\simeq \left\{\nu_E=-\nu_F \right\}.
      \end{equation*}
  \end{lemma}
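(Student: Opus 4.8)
The plan is to treat the two assertions separately, in both cases relying on the structure theory for sets of finite perimeter recalled above. For the first assertion, the inclusion $\{\nu_E=\nu_F\}\cup\{\nu_E=-\nu_F\}\subset \p^* E\cap \p^* F$ is immediate from the definitions, so the real content is to show that the remaining part of $\p^* E\cap \p^* F$, namely the set $T$ of points $x\in \p^* E\cap \p^* F$ at which $\nu_E(x)\neq \pm\nu_F(x)$, is $\H^{n-1}$-negligible. First I would invoke De Giorgi's structure theorem: $\p^* E$ and $\p^* F$ are countably $(n-1)$-rectifiable, and at $\H^{n-1}$-a.e. of their points they admit an approximate tangent hyperplane, equal to $\nu_E(x)^\perp$ and $\nu_F(x)^\perp$, respectively. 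The crucial input is the locality of approximate tangent planes for rectifiable sets: if $M_1,M_2$ are $(n-1)$-rectifiable, then at $\H^{n-1}$-a.e. point of $M_1\cap M_2$ the two approximate tangent planes coincide. Applying this to $M_1=\p^* E$ and $M_2=\p^* F$ gives $\nu_E(x)^\perp=\nu_F(x)^\perp$, hence $\nu_E(x)=\pm\nu_F(x)$, for $\H^{n-1}$-a.e. $x\in \p^* E\cap \p^* F$; thus $\H^{n-1}(T)=0$, which is exactly the first claim.

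For the second assertion, I would show by a blow-up argument that the set $\{\nu_E=\nu_F\}$ is in fact empty once $E\cap F=\emptyset$, so that only $\{\nu_E=-\nu_F\}$ survives. Fix $x\in\{\nu_E=\nu_F\}$ and write $\nu=\nu_E(x)=\nu_F(x)$. By De Giorgi's theorem, the rescalings $E_{x,r}:=(E-x)/r$ and $F_{x,r}:=(F-x)/r$ both converge in $L^1_{\loc}(\R^n)$ as $r\to 0$ to the same half-space $H:=\{y\in\R^n:y\cdot\nu<0\}$. Since all characteristic functions are uniformly bounded by $1$, the product converges as well, $\chi_{E_{x,r}}\chi_{F_{x,r}}=\chi_{(E\cap F)_{x,r}}\to \chi_H$ in $L^1_{\loc}$, whence the density of $E\cap F$ at $x$ equals $|H\cap B(0,1)|/|B(0,1)|=1/2$. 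This contradicts $|E\cap F|=0$, which holds trivially since $E\cap F=\emptyset$. Therefore $\{\nu_E=\nu_F\}=\emptyset$, and combined with the first assertion this yields $\p^* E\cap \p^* F\simeq\{\nu_E=-\nu_F\}$. This sign is the expected one: two disjoint sets sharing a piece of reduced boundary have opposite measure-theoretic outer normals there.

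The hard part will be the locality statement for approximate tangent planes underpinning the first assertion; one must apply it correctly to the rectifiable pieces and then discard, in a consistent manner, all the $\H^{n-1}$-null exceptional sets arising from rectifiability, from the existence of approximate tangent planes, and from Federer's theorem. The second assertion is more elementary, the only care needed being the passage to the limit in the product of characteristic functions and the identification of the half-space density with $1/2$. I also note that both assertions are essentially contained in the structure theory of \cite[Theorem 16.3]{maggi} and \cite{ambrosio}, so in the final write-up one may alternatively cite these references directly rather than reproduce the blow-up computation.
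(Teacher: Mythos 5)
Your proof is correct, but it takes a genuinely different route from the paper's. For the first identity, the paper never invokes rectifiability or tangent planes directly: it combines the three decomposition formulas of \cite[Theorem 16.3]{maggi} for $\p^*(E\cap F)$, $\p^*(E\setminus F)$ and $\p^*(E\cup F)$ with Federer's theorem ($\p^* E\cap\p^* F\subset E^{(1/2)}\cap F^{(1/2)}$, which kills the $F^{(0)}$- and $F^{(1)}$-terms in those formulas) to conclude $\p^* E\cap\p^* F\subsetsim\{\nu_E=\nu_F\}\cup\{\nu_E=-\nu_F\}$. You instead go one level deeper, to De Giorgi's structure theorem plus the locality of approximate tangent planes of $(n-1)$-rectifiable sets; this is a valid and standard alternative, arguably more geometrically transparent, at the cost of quoting a slightly heavier piece of the structure theory. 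For the second identity the paper simply observes that $E=E\setminus F$ and reads off the decomposition of $\p^*(E\setminus F)$, whereas your blow-up argument shows that a point of $\{\nu_E=\nu_F\}$ would force $E\cap F$ to have density $1/2$ there; since De Giorgi's blow-up holds at \emph{every} point of the reduced boundary, your argument actually yields the exact emptiness $\{\nu_E=\nu_F\}=\emptyset$, which is marginally stronger than the $\H^{n-1}$-a.e. statement the paper needs (and, as you note, the sign is consistent with disjoint sets having opposite outer normals along a common boundary). The one point to write carefully in a final version is the passage to the limit in $\chi_{E_{x,r}}\chi_{F_{x,r}}$, which you justify correctly via the uniform bound and the triangle inequality; otherwise both parts are sound.
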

  \begin{proof}
      According to \cite[Theorem 16.3]{maggi}, we have
      \begin{align*}
          \p^*(E\cap F)&\simeq \left(F^{(1)}\cap \p^* E\right)\cup \left(E^{(1)}\cap \p^* F\right)\cup \left\{ \nu_E=\nu_F\right\},\\
          \p^*(E\setminus F)&\simeq \left(F^{(0)}\cap \p^* E\right)\cup \left(E^{(1)}\cap \p^* F\right)\cup \left\{\nu_E=-\nu_F\right\},\\
          \p^*(E\cup F)&\simeq \left(F^{(0)}\cap \p^* E\right)\cup \left(E^{(0)}\cap \p^* F\right)\cup \left\{ \nu_E=\nu_F\right\}.
      \end{align*}
      Since $E\cap F$, $E\cup F$ and $E\setminus F$ are all sets of finite perimeter (see e.g. \cite[Lemma 12.22]{maggi}), we can obtain
      \begin{align*}
          \p^* E&=\p^*\left((E\setminus F)\cup (E\cap F)\right)\\
          &\simeq \left((E\setminus F)^{(0)}\cap \p^*(E\cap F)\right)\cup \left((E\cap F)^{(0)}\cap \p^*(E\setminus F)\right)\cup \left\{\nu_{E\setminus F}=\nu_{E\cap F}\right\}\\
          &\subset \p^*(E\cap F)\cup \p^* (E\setminus F)
      \end{align*}
      as $\left\{\nu_{E\setminus F}=\nu_{E\cap F}\right\}\subset \p^*(E\cap F)\cap \p^* (E\setminus F)$ by definition. \par
      By Federer's theorem, $\p^* E\cap \p^* F\subset E^{(1/2)}\cap F^{(1/2)}$. Therefore we have
      \begin{align*}
          \p^* E\cap \p^* F\subsetsim \left\{ \nu_E=\nu_F\right\}\cup \left\{ \nu_E=-\nu_F\right\}.
      \end{align*}
      Since $\left\{ \nu_E=\nu_F\right\}\cup \left\{ \nu_E=-\nu_F\right\}\subset \p^* E\cap \p^* F$ by definition, then there follows $\p^* E\cap \p^* F\simeq \left\{\nu_E=\nu_F \right\}\cup \left\{\nu_E=-\nu_F \right\}$.
 
      Similarly, when $E\cap F=\emptyset$, i.e., $E=E\setminus F$, we have $\p^* E=\p^* (E\setminus F)$ and $\p^* E\cap \p^* F\simeq \left\{\nu_E=-\nu_F \right\}$ follows.
  \end{proof}
  \begin{lemma}
  \label{lm:skew}
          For any open subset $U\subset\subset \Omega$, if $\bu\in (H^2(U))^n\cap (C^1(U))^n$ satisfies $\be(\bu)=0$, then $\bu=\boldsymbol{c}+A\x$ in $U$, where $\boldsymbol{c}\in \R^n$ is a constant vector and $A\in \R^{n\times n}$ is a constant skew-symmetric matrix.
      \end{lemma}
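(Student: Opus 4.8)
The plan is to establish the classical characterization of infinitesimal rigid motions: the vanishing of the symmetrized gradient forces each component $u_i$ to be affine, with the associated linear part necessarily skew-symmetric. Writing the hypothesis $\be(\bu)=0$ in components, we have $\varepsilon_{ij}=\tfrac12(\p_j u_i+\p_i u_j)=0$ for all $1\leq i,j\leq n$, understood as an identity in $H^1(U)$.

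First I would exploit the standard second-order identity
\[
\p_k\p_j u_i = \p_j\varepsilon_{ik}+\p_k\varepsilon_{ij}-\p_i\varepsilon_{jk},
\]
which follows by a direct expansion together with the symmetry of mixed partial derivatives. Because $\bu\in (H^2(U))^n$, all second derivatives appearing here exist as $L^2(U)$ functions and the identity holds in the distributional sense (equivalently, almost everywhere). Since $\varepsilon_{ik},\varepsilon_{ij},\varepsilon_{jk}$ vanish as elements of $H^1$, every term on the right-hand side is zero, and we conclude that all second weak derivatives $\p_k\p_j u_i$ vanish in $U$.

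Next I would upgrade ``vanishing second weak derivatives'' to ``affine''. For each fixed $i$, the weak gradient $\nabla u_i$ has vanishing weak derivative, hence equals a constant vector on the connected set $U$ by the standard characterization of locally constant Sobolev functions; a further integration then shows that $u_i$ is affine. Collecting the components gives $\bu(\x)=\boldsymbol{c}+A\x$ for a constant vector $\boldsymbol{c}\in\R^n$ and a constant matrix $A\in\R^{n\times n}$ with $A_{ij}=\p_j u_i$. Here I use that $U$ is connected, which holds in our application since $U=\V$ is a connected component; on a disconnected set one would only obtain a piecewise rigid motion, so connectedness is implicitly assumed in the statement.

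Finally, substituting $\bu=\boldsymbol{c}+A\x$ back into $\be(\bu)=0$ yields $\tfrac12(A+A^T)=0$, so that $A$ is skew-symmetric, which completes the argument. The only genuine subtlety is carrying the pointwise identity through at the reduced regularity $H^2\cap C^1$ rather than $C^2$; this causes no real difficulty, since $H^2$ already furnishes the required second derivatives as $L^2$ functions and the key identity is a distributional one, so the verification is purely a matter of bookkeeping.
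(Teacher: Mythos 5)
Your proposal is correct and follows essentially the same route as the paper: both arguments use the $H^2$ regularity to commute weak second derivatives and deduce $\p_k\p_j u_i=0$ from $\be(\bu)=0$ (your Saint--Venant-type identity is exactly the paper's braiding computation $\p_i\p_j u_k=-\p_k\p_j u_i=\p_i\p_k u_j=-\p_i\p_j u_k$ packaged as a single formula), then conclude that $\nabla\bu$ is a constant skew-symmetric matrix. Your remark about connectedness of $U$ is a fair observation that applies equally to the paper's proof and is harmless in the application where $U=\V$ is connected.
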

      \begin{proof}
          Write $\bu=(u_1,u_2,\cdots,u_n)$, then for $1\leq i,j\leq n$, $\be(\bu)=\nabla \bu+(\nabla \bu)^T=0$ reads that
          \begin{align}
              \frac{\p u_i}{\p x_i}=0,\ \frac{\p u_j}{\p x_i}=-\frac{\p u_i}{\p x_j}.
          \end{align}
          Since $u_k\in H^2(U)$ for $1\leq k\leq n$, there holds
            \begin{equation}
                \frac{\p^2 u_k}{\p x_i\p x_j}=\frac{\p^2 u_k}{\p x_j\p x_i} \;\text{ in } L^2(U).
            \end{equation}
            Therefore, for any $1\leq i,j,k\leq n$, we have
            \begin{equation}
                 \frac{\p^2 u_k}{\p x_i\p x_j}=- \frac{\p^2 u_i}{\p x_k\p x_j}= \frac{\p^2 u_j}{\p x_i\p x_k}=- \frac{\p^2 u_k}{\p x_i\p x_j}=0.
            \end{equation}
            This shows that $\frac{\p u_i}{\p x_j}$ is constant for all $i,j$, and thus $\nabla \bu\in \R^{n\times n}$ is a constant skew-symmetric matrix. Due to $\bu\in (C^1(U))^n$, we obtain $\bu=\boldsymbol{c}+A\x$ in $U$ with a constant vector $\boldsymbol{c}\in \R^n$ and a constant skew-symmetric matrix $A=\nabla \bu\in \R^{n\times n}$.
      \end{proof}

\bigskip
\bibliography{reference}

\begin{thebibliography}{10}

\bibitem{Alessandrini05}
G.~Alessandrini and M.~Di~Cristo.
\newblock Stable determination of an inclusion by boundary measurements.
\newblock {\em SIAM J. Math. Anal.}, 37(1):200--217, 2005.

\bibitem{ambrosio}
Luigi Ambrosio, Nicola Fusco, and Diego Pallara.
\newblock {\em Functions of bounded variation and free discontinuity problems}.
\newblock Oxford Math. Monogr. Oxford: Clarendon Press, 2000.

\bibitem{john2016}
John Andersson.
\newblock Optimal regularity for the {Signorini} problem and its free boundary.
\newblock {\em Invent. Math.}, 204(1):1--82, 2016.

\bibitem{ang1998}
Dang~Ding Ang, Masaru Ikehata, Dang~Duc Trong, and Masahiro Yamamoto.
\newblock Unique continuation for a stationary isotropic {Lam{\'e}} system with variable coefficients.
\newblock {\em Commun. Partial Differ. Equations}, 23(1-2):371--385, 1998.

\bibitem{athanasopoulos}
I.~Athanasopoulos and L.~A. Caffarelli.
\newblock Optimal regularity of lower dimensional obstacle problems.
\newblock {\em J. Math. Sci., New York}, 132(1):49--66, 226, 2004.

\bibitem{Bacchelli09}
Valeria Bacchelli.
\newblock Uniqueness for the determination of unknown boundary and impedance with the homogeneous {Robin} condition.
\newblock {\em Inverse Probl.}, 25(1):4, 2009.
\newblock Id/No 015004.

\bibitem{Beretta14}
Elena Beretta, Elisa Francini, and Sergio Vessella.
\newblock Uniqueness and {Lipschitz} stability for the identification of {Lam{\'e}} parameters from boundary measurements.
\newblock {\em Inverse Probl. Imaging}, 8(3):611--644, 2014.

\bibitem{Bourgeois10}
Laurent Bourgeois and J{\'e}r{\'e}mi Dard{\'e}.
\newblock A quasi-reversibility approach to solve the inverse obstacle problem.
\newblock {\em Inverse Probl. Imaging}, 4(3):351--377, 2010.

\bibitem{Caffarelli1979}
Luis~A Caffarelli.
\newblock Further regularity for the signorini problem.
\newblock {\em Communications in Partial Differential Equations}, 4(9):1067--1075, 1979.

\bibitem{Lauri20}
C{\u{a}}t{\u{a}}lin~I. C{\^a}rstea, Gen Nakamura, and Lauri Oksanen.
\newblock Uniqueness for the inverse boundary value problem of piecewise homogeneous anisotropic elasticity in the time domain.
\newblock {\em Trans. Am. Math. Soc.}, 373(5):3423--3443, 2020.

\bibitem{monica2019}
Gui-Qiang~G. Chen, Giovanni~E. Comi, and Monica Torres.
\newblock Cauchy fluxes and {Gauss}-{Green} formulas for divergence-measure fields over general open sets.
\newblock {\em Arch. Ration. Mech. Anal.}, 233(1):87--166, 2019.

\bibitem{Joonas23}
Maarten~V. de~Hoop, Joonas Ilmavirta, Matti~J. Lassas, and Anthony V{\'a}rilly-Alvarado.
\newblock Reconstruction of generic anisotropic stiffness tensors from partial data around one polarization.
\newblock Preprint, {arXiv}:2307.03312 [math.{DG}] (2023), 2023.

\bibitem{Maarten23}
Maarten~V. de~Hoop, Matti Lassas, Jinpeng Lu, and Lauri Oksanen.
\newblock Quantitative unique continuation for the elasticity system with application to the kinematic inverse rupture problem.
\newblock {\em Commun. Partial Differ. Equations}, 48(2):286--314, 2023.

\bibitem{Maarten17}
Maarten~V. de~Hoop, Gen Nakamura, and Jian Zhai.
\newblock Reconstruction of {Lam{\'e}} moduli and density at the boundary enabling directional elastic wavefield decomposition.
\newblock {\em SIAM J. Appl. Math.}, 77(2):520--536, 2017.

\bibitem{Maarten19}
Maarten~V. de~Hoop, Gen Nakamura, and Jian Zhai.
\newblock Unique recovery of piecewise analytic density and stiffness tensor from the elastic-wave {Dirichlet}-to-{Neumann} map.
\newblock {\em SIAM J. Appl. Math.}, 79(6):2359--2384, 2019.

\bibitem{de2020recovery}
Maarten~V de~Hoop, Gunther Uhlmann, and Andr{\'a}s Vasy.
\newblock Recovery of material parameters in transversely isotropic media.
\newblock {\em Archive for Rational Mechanics and Analysis}, 235:141--165, 2020.

\bibitem{DL}
Georges Duvant and Jacques~Louis Lions.
\newblock {\em Inequalities in mechanics and physics}, volume 219.
\newblock Springer-Verlag Berlin, 1976.

\bibitem{eberle2021}
S.~Eberle, B.~Harrach, H.~Meftahi, and T.~Rezgui.
\newblock Lipschitz stability estimate and reconstruction of {Lam{\'e}} parameters in linear elasticity.
\newblock {\em Inverse Probl. Sci. Eng.}, 29(3):396--417, 2021.

\bibitem{eck2005unilateral}
Christof Eck, Jiri Jarusek, and Miroslav Krbec.
\newblock {\em Unilateral contact problems: variational methods and existence theorems}.
\newblock CRC Press, 2005.

\bibitem{Eskin02}
G.~Eskin and J.~Ralston.
\newblock On the inverse boundary value problem for linear isotropic elasticity.
\newblock {\em Inverse Probl.}, 18(3):907--921, 2002.

\bibitem{survey-obstacle}
Xavier Fern{\'a}ndez-Real.
\newblock {The Thin Obstacle Problem: A Survey}.
\newblock {\em Publicacions Matemàtiques}, 66(1):3 -- 55, 2022.

\bibitem{fichera2}
G.~Fichera.
\newblock Sul problema elastostatico di {Signorini} con ambigue condizioni al contorno.
\newblock {\em Atti Accad. Naz. Lincei, VIII. Ser., Rend., Cl. Sci. Fis. Mat. Nat.}, 34:138--142, 1963.

\bibitem{figalli2019regularity}
Alessio Figalli.
\newblock Regularity of interfaces in phase transitions via obstacle problems—fields medal lecture.
\newblock In {\em Proceedings of the International Congress of Mathematicians—Rio de Janeiro 2018}, volume~1, pages 225--247, 2019.

\bibitem{Frehse1977}
Jens Frehse.
\newblock On signorini's problem and variational problems with thin obstacles.
\newblock {\em Annali della Scuola Normale Superiore di Pisa-Classe di Scienze}, 4(2):343--362, 1977.

\bibitem{Hiroshi07}
Takanori Ide, Hiroshi Isozaki, Susumu Nakata, Samuli Siltanen, and G.~Uhlmann.
\newblock Probling for electrical inclusions with complex spherical waves.
\newblock {\em Commun. Pure Appl. Math.}, 60(10):1415--1442, 2007.

\bibitem{Ikehata98}
Masaru Ikehata.
\newblock Reconstruction of the shape of the inclusion by boundary measurements.
\newblock {\em Commun. Partial Differ. Equations}, 23(7-8):1459--1474, 1998.

\bibitem{ilmavirta2023gauge}
Joonas Ilmavirta and Hjordis Schl\"uter.
\newblock Gauge freedoms in the anisotropic elastic dirichlet-to-neumann map.
\newblock {\em Inverse Problems and Imaging}, 2024.

\bibitem{Gunther12}
Oleg~Yu. Imanuvilov, Gunther Uhlmann, and Masahiro Yamamoto.
\newblock On uniqueness of {Lam{\'e}} coefficients from partial {Cauchy} data in three dimensions.
\newblock {\em Inverse Probl.}, 28(12):5, 2012.
\newblock Id/No 125002.

\bibitem{Isakov1988}
Victor Isakov.
\newblock On uniqueness of recovery of a discontinuous conductivity coefficient.
\newblock {\em Communications on pure and applied mathematics}, 41(7):865--877, 1988.

\bibitem{Isakov09}
Victor Isakov.
\newblock Inverse obstacle problems.
\newblock {\em Inverse Probl.}, 25(12):18, 2009.
\newblock Id/No 123002.

\bibitem{kikuchi1988contact}
Noboru Kikuchi and John~Tinsley Oden.
\newblock {\em Contact problems in elasticity: a study of variational inequalities and finite element methods}.
\newblock SIAM, 1988.

\bibitem{kinderlehrerF}
David Kinderlehrer.
\newblock Remarks about {Signorini}'s problem in linear elasticity.
\newblock {\em Ann. Sc. Norm. Super. Pisa, Cl. Sci., IV. Ser.}, 8:605--645, 1981.

\bibitem{Kwon04}
Kiwoon Kwon.
\newblock Identification of anisotropic anomalous region in inverse problems.
\newblock {\em Inverse Probl.}, 20(4):1117--1136, 2004.

\bibitem{lax}
Peter~D. Lax and Ralph~S. Phillips.
\newblock {\em Scattering theory.}, volume~26 of {\em Pure Appl. Math., Academic Press}.
\newblock Boston etc.: Academic Press, Inc., rev. ed. edition, 1989.

\bibitem{LS}
J.~L. Lions and G.~Stampacchia.
\newblock Variational inequalities.
\newblock {\em Communications on Pure and Applied Mathematics}, 20(3):493--519, 1967.

\bibitem{maggi}
Francesco Maggi.
\newblock {\em Sets of finite perimeter and geometric variational problems. {An} introduction to geometric measure theory}, volume 135 of {\em Camb. Stud. Adv. Math.}
\newblock Cambridge: Cambridge University Press, 2012.

\bibitem{Mazzucato06}
Anna~L. Mazzucato and Lizabeth~V. Rachele.
\newblock Partial uniqueness and obstruction to uniqueness in inverse problems for anisotropic elastic media.
\newblock {\em J. Elasticity}, 83(3):205--245, 2006.

\bibitem{Nakamura93}
Gen Nakamura and Gunther Uhlmann.
\newblock Identification of {Lam{\'e}} parameters by boundary measurements.
\newblock {\em Am. J. Math.}, 115(5):1161--1187, 1993.

\bibitem{Gunther94}
Gen Nakamura and Gunther Uhlmann.
\newblock Global uniqueness for an inverse boundary problem arising in elasticity.
\newblock {\em Invent. Math.}, 118(3):457--474, 1994.

\bibitem{Nakamura03}
Gen Nakamura and Gunther Uhlmann.
\newblock Global uniqueness for an inverse boundary value problem arising in elasticity.
\newblock {\em Inventiones mathematicae}, 152(1):205--207, Apr 2003.

\bibitem{Gunther05}
Gen Nakamura, Gunther Uhlmann, and Jenn-Nan Wang.
\newblock Oscillating-decaying solutions, {Runge} approximation property for the anisotropic elasticity system and their applications to inverse problems.
\newblock {\em J. Math. Pures Appl. (9)}, 84(1):21--54, 2005.

\bibitem{Rachele00}
Lizabeth~V. Rachele.
\newblock Boundary determintion for an inverse problem in elastodynamics.
\newblock {\em Commun. Partial Differ. Equations}, 25(11-12):1951--1996, 2000.

\bibitem{Rachele00_1}
Lizabeth~V. Rachele.
\newblock An inverse problem in elastodynamics: {Uniqueness} of the wave speeds in the interior.
\newblock {\em J. Differ. Equations}, 162(2):300--325, 2000.

\bibitem{ros2018obstacle}
Xavier Ros-Oton.
\newblock Obstacle problems and free boundaries: an overview.
\newblock {\em SeMA Journal}, 75(3):399--419, 2018.

\bibitem{schumann1989}
Rainer Schumann.
\newblock Regularity for {Signorini}'s problem in linear elasticity.
\newblock {\em Manuscr. Math.}, 63(3):255--291, 1989.

\bibitem{sig1}
A.~Signorini.
\newblock Sopra alcune questioni di elastostatica.
\newblock Atti {Soc}. {It}. {Progr}. {Sc}. {{\(21_{\text{II}}\)}}, 143-148 (1933)., 1933.

\bibitem{sofonea}
Mircea Sofonea and Andaluzia Matei.
\newblock {\em Mathematical models in contact mechanics}, volume 398 of {\em Lond. Math. Soc. Lect. Note Ser.}
\newblock Cambridge: Cambridge University Press, 2012.

\bibitem{Stefanov18}
Plamen Stefanov, Gunther Uhlmann, and Andras Vasy.
\newblock Local recovery of the compressional and shear speeds from the hyperbolic {DN} map.
\newblock {\em Inverse Probl.}, 34(1):13, 2018.
\newblock Id/No 014003.

\bibitem{tani2020dynamic}
Atusi Tani.
\newblock Dynamic unilateral contact problem with averaged friction for a viscoelastic body with cracks.
\newblock In {\em Mathematical Analysis of Continuum Mechanics and Industrial Applications III: Proceedings of the International Conference CoMFoS18 18}, pages 3--21. Springer, 2020.

\bibitem{Gunther07}
Gunther Uhlmann and Jenn-Nan Wang.
\newblock Complex spherical waves for the elasticity system and probing of inclusions.
\newblock {\em SIAM J. Math. Anal.}, 38(6):1967--1980, 2007.

\bibitem{uhlmann2009}
Gunther Uhlmann and Jenn-Nan Wang.
\newblock Unique continuation property for the elasticity with general residual stress.
\newblock {\em Inverse Probl. Imaging}, 3(2):309--317, 2009.

\bibitem{Wang09}
Gunther Uhlmann, Jenn-Nan Wang, and Chin-Tien Wu.
\newblock Reconstruction of inclusions in an elastic body.
\newblock {\em J. Math. Pures Appl. (9)}, 91(6):569--582, 2009.

\bibitem{ziemer}
William~P. Ziemer.
\newblock {\em Weakly differentiable functions. {Sobolev} spaces and functions of bounded variation}, volume 120 of {\em Grad. Texts Math.}
\newblock Berlin etc.: Springer-Verlag, 1989.

\end{thebibliography}
\bibliographystyle{plain}

\end{document}